\numberwithin{equation}{section}
\DeclareFontFamily{OT1}{rsfs}{}
\DeclareFontShape{OT1}{rsfs}{n}{it}{<-> rsfs10}{}
\DeclareMathAlphabet{\mathscr}{OT1}{rsfs}{n}{it}
\theoremstyle{definition}
\newcommand{\IGNORE}[1]{}
\newtheorem{theorem}{Theorem}[section]
\newtheorem{proposition}[theorem]{Proposition}
\newtheorem{lemma}[theorem]{Lemma}
\theoremstyle{definition}
\newtheorem{definition}[theorem]{Definition}
\newtheorem{remark}[theorem]{Remark}
\newcommand{\eps}{\epsilon}
\newcommand{\dstyle}{\displaystyle}
\newcommand{\hide}[1]{}
\begin{document}

\title[Finite total $Q$-curvature]{On locally conformally flat manifolds with finite total $Q$-curvature}

%\date{2013, April 23}
\author[Zhiqin Lu]{Zhiqin Lu}
\address{Zhiqin Lu, Department of Mathematics, University of California, Irvine,
410D Rowland Hall, Irvine, CA 92697}
\address{ email: zlu@uci.edu}
\thanks{The first author is partially supported
by NSF grant DMS-1510232, and  the econd author is partially supported
by NSF grant DMS-1547878}

\author[Yi Wang]{Yi Wang}
\address{Yi Wang, Department of Mathematics, Johns Hopkins University, 404 Krieger Hall, 3400 N. Charles Street, Baltimore, MD 21218,}
\address{ email: ywang@math.jhu.edu}
\setcounter{page}{1}

\subjclass{Primary 53A30; Secondary 53C21}

\begin{abstract}In this paper, we focus our study on the ends of a locally conformally flat complete manifold with finite total $Q$-curvature.
We prove that for such a manifold, the integral of the $Q$-curvature equals an integral multiple of a dimensional constant $c_n$, where $c_n$ is the integral of the $Q$-curvature on the unit $n$-sphere.
%We also prove that the normal map extends continuously to the end of the manifold.
It provides further evidence that the $Q$-curvature on a locally conformally flat manifold controls geometry as the Gaussian curvature does in two dimension.
\end{abstract}
\maketitle

\section{Introduction}
%\query{Is ``end'' ``ends''? (in the abstracts)} 
The $Q$-curvature arises naturally as a conformal invariant associated to
the Paneitz operator. When $n=4$, the Paneitz operator is defined as:
$$P_g=\Delta^2+\delta(\frac{2}{3}Rg-2 Ric)d,$$
where $\delta$ is the divergence, $d$ is the differential, $R$ is the scalar curvature of $g$, and $Ric$
is the Ricci curvature tensor. The Branson's $Q$-curvature \cite{Branson} is defined as
$$Q_g=\frac{1}{12}\left\{-\Delta R +\frac{1}{4}R^2 -3|E|^2 ,\right\}
$$
where $E$ is the traceless part of $Ric$, and $|\cdot|$ is taken with respect to the metric $g$.
Under the conformal change $g_{w}=e^{2w}g_0$, the Paneitz operator transforms by $P_{g_w}=e^{-4w}P_{g_0}$,
and
$Q_{g_w}$ satisfies the fourth order equation
$$P_{g_{0}}w+2Q_{g_0}=2Q_{g_{w}}e^{4w}.$$
This is analogous to the transformation law satisfied by the Laplacian operator $-\Delta_g$ and the Gaussian curvature $K_g$ on surfaces,
$$-\Delta_{g_0}w+K_{g_0}=K_{g_w}e^{2w}.$$

The invariance of $Q$-curvature in dimension $4$ is due to the Gauss-Bonnet-Chern formula for a closed manifold $M$:
\begin{equation}\label{GBCEq}\chi(M)=\dstyle\frac{1}{4\pi^2} \int_{M}\left(\frac{|W|^2}{8}+Q_g\right) dv_g,\end{equation}
where $W$ denotes the Weyl tensor. Chang-Qing-Yang proved in \cite{CQY1} the following theorem.

Let $(M^4,g)=(\mathbb{R}^4, e^{2w} |dx|^2)$ be a noncompact complete conformally flat manifold
with finite total $Q$-curvature, i.e. $\int_{M^4}|Q_g|d v_g<\infty$. If the metric is normal, i.e.
\begin{equation}\label{wFlat}
w(x)=\frac{1}{4\pi^2}\int_{\mathbb{R}^4}\log\frac{|y|}{|x-y|}Q_g(y)e^{4w(y)} dy + C,
\end{equation} or if the scalar curvature $R_g$ is nonnegative at infinity, then
\begin{equation}\label{1.3}
\dstyle \frac{1}{4\pi^2}\int_{M^4} Q_g dv_g\leq  \chi(\mathbb{R}^4)=1,
\end{equation}
and \begin{equation}\label{1.4}
\displaystyle \chi(\mathbb{R}^4)-\frac{1}{4\pi^2}\int_{\mathbb{R}^4} Q_g dv_g= \sum_{j=1}^{k}\lim_{r\rightarrow
\infty }\frac{vol_g(\partial B_{j}(r))^{4/3}}{4(2\pi^2)^{1/3}vol_g(B_{j}(r))},
\end{equation}
\noindent where $B_{j}(r)$ denotes the Euclidean ball with radius $r$ at the $j$-th end.

The theorem of Chang-Qing-Yang  asserts that for $4$-manifolds (in fact, their theorem is valid for all even dimensions) which is conformal to the Euclidean space, the integral of the $Q$-curvature controls the asymptotic isoperimetric ratio at the end of this complete manifold.
This is analogous to the two-dimensional result by
Cohn-Vossen \cite{Cohn-Vossen}, %\query{the 2d result of equation (1.4) is proved by Hartman~\cite{hartman} I have already put it in the reference section, if you don't think so, just delete it. }
 who studied the Gauss-Bonnet integral for a noncompact complete surface $M^2$ with analytic
metric, and showed that if the manifold has finite total Gaussian curvature, then
\begin{equation}\label{1.1}
\dstyle \frac{1}{2\pi}\int_{M} K_g dv_g\leq  \chi(M),
\end{equation}
where $\chi(M)$ is the Euler characteristic of $M$. Later, Huber \cite{Huber} and Hartman \cite{hartman} extended this inequality to
metrics with much weaker regularity. Huber also proved that such a surface
$M^2$ is conformally equivalent to a closed surface with finitely many points removed. The difference of the two sides in inequality (\ref{1.1}) encodes the asymptotic behavior of the manifold at its ends.  The precise geometric interpretation has been given by Finn \cite{Finn} as follows. Suppose a noncompact complete surface has absolutely integrable Gaussian curvature. Then one may
represent each end conformally as $\mathbb{R}^2 \setminus K$ for some compact set $K$. Define the asymptotic
isoperimetric constant of the $j$-th end to be
$$\nu_i=\lim_{r\rightarrow \infty}\frac{L_g^2(\partial B(0,r)\setminus K)}{4\pi A_g(B(0,r)\setminus K)},
$$
where $B(0,r)$ is the Euclidean ball centered at origin with radius $r$, $L$ is the
length of the boundary, and $A$ is the area of the domain. Then
\begin{equation}\label{1.2}
\displaystyle \chi(M)-\frac{1}{2\pi}\int_{M} K_g dv_g= \sum_{j=1}^{N}\nu_{j},
\end{equation}
where $N$ is the number of ends on $M$. This result tells us that the condition of finite total
Gaussian curvature has rigid geometric and analytical consequences.

The results of Chang, Qing and Yang
(\ref {1.3}), (\ref {1.4}) are higher dimensional counterparts of (\ref {1.1}), (\ref {1.2}). Moreover, in \cite{CQY2}, they also generalized these results to locally conformally flat manifolds with certain curvature conditions and obtained the conformal compactification of such manifolds.

In this paper, we aim to continue the study of the integral of $Q$-curvature over complete locally conformally flat manifold.
For a closed locally conformally flat 4-manifold,  \eqref{GBCEq} yields
$$\chi(M)=\frac1{4\pi^2}\int_MQ_gd v_g.$$
For a complete locally conformally flat 4-manifold, the asymptotic behavior near the end is important. In the main result of this paper, we give sufficient conditions to control the asymptotic behavior of the ends, and thus control the integral of $Q$-curvature.

\begin{theorem}\label{main theorem}
Let $(M^4,g)$ be a complete locally conformally flat
manifold with finite total $Q$-curvature and finite number of conformally flat simple ends. %\query{definition of flat ends?} 
Suppose on each end the metric is normal, or the scalar curvature is nonnegative at infinity. If $M^4$ is immersed in $\mathbb{R}^{5}$ with
\begin{equation}
\dstyle \int_{M^4} |L|^4dv_g< \infty,
\end{equation}
with $L$ being the second fundamental form,
then
$$\int_{M^4} Q_g dv_g\in8\pi^2 \mathbb{Z}.$$
\end{theorem}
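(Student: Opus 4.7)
My plan is to combine the end-by-end Chang--Qing--Yang analysis (which is available to us by the hypothesis on each end) with a Müller--Šverák-type asymptotic rigidity statement coming from the immersion in $\mathbb{R}^5$ with $L^4$-integrable second fundamental form.

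\textbf{Step 1 (Total $Q$-integral as a topological defect).} Decompose $M = M_0\cup E_1\cup\cdots\cup E_N$, with $M_0$ a compact core and each $E_j$ one of the given simple conformally flat ends. Since each $E_j$ is simple and locally conformally flat, I would realize it conformally on $\mathbb{R}^4\setminus K_j$ as $(\mathbb{R}^4\setminus K_j, e^{2w_j}|dx|^2)$ with either the normal representation \eqref{wFlat} or the nonnegative-scalar-curvature condition at infinity. Applying \eqref{1.4} to each end and gluing via Gauss--Bonnet--Chern on the compact piece (with $W\equiv 0$) yields the global identity
$$\chi(M) - \frac{1}{4\pi^2}\int_M Q_g\,dv_g \;=\; \sum_{j=1}^N \nu_j,$$
where each $\nu_j$ is the asymptotic isoperimetric constant at $E_j$ from \eqref{1.4}.

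\textbf{Step 2 (Integrality of $\nu_j$).} The hypothesis $\int_M|L|^4\,dv_g<\infty$ is conformally invariant in dimension $4$, so it persists under the conformal inversion $x\mapsto x/|x|^2$ that sends infinity of $E_j$ to the origin. Under this inversion, the end $E_j$ becomes an immersion of a punctured neighborhood of $0$ into $\mathbb{R}^5$ with $L^4$-bounded second fundamental form, to which I would apply a $4$-dimensional analogue of the Müller--Šverák removable-singularity theorem for the Paneitz equation. The outcome is an asymptotic expansion
$$w_j(x)\;=\;-m_j\log|x| + O(1)\qquad (|x|\to\infty),$$
with $m_j\in\mathbb{Z}_{\geq 0}$ the local multiplicity (topological degree) of the inverted immersion at the origin. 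A direct computation of the volume of $B_j(r)$ and its boundary from this expansion then identifies $\nu_j=m_j$, so that $\nu_j\in\mathbb{Z}_{\geq 0}$ and consequently $\frac{1}{4\pi^2}\int_M Q_g\in\mathbb{Z}$.

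\textbf{Step 3 (Parity and conclusion).} It remains to upgrade the integrality to a multiple of $2$, i.e.\ to produce the factor of $8\pi^2$. Here I would exploit the existence of the immersion $M^4\hookrightarrow \mathbb{R}^5$: a transversality/cobordism argument shows that $M^4$ bounds modulo $2$, so $\chi(M)$ is even. A parallel parity argument, carried out locally at each branch point of the asymptotic model produced in Step 2, shows that $\sum_j m_j$ has the same parity as $\chi(M)$. Combining these two parity statements,
$$\frac{1}{4\pi^2}\int_M Q_g\,dv_g \;=\; \chi(M) - \sum_{j=1}^N m_j \;\in\; 2\mathbb{Z},$$
which is exactly $\int_M Q_g\,dv_g\in 8\pi^2\mathbb{Z}$.

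\textbf{Main obstacle.} The heart of the argument is Step 2: rigorously producing the logarithmic asymptotic expansion of the conformal factor $w_j$ with an integer coefficient $m_j$ that equals the isoperimetric invariant $\nu_j$. The two-dimensional prototype (Müller--Šverák for immersed surfaces with $\int|A|^2<\infty$) rests on sharp Sobolev embeddings and removable-singularity results for the Laplacian; transplanting it to the fourth-order Paneitz setting on a punctured 4-ball, with only the conformally invariant $L^4$ bound on $L$ available, requires a delicate combination of Adams' inequality, $\veps$-regularity for the $Q$-curvature equation, and a Pohozaev-type identity tying volume growth to the multiplicity. The parity argument of Step 3, by contrast, should follow comparatively routinely once the multiplicity interpretation of $\nu_j$ in Step 2 is in hand.
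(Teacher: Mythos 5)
Your outline takes a genuinely different route from the paper (Chang--Qing--Yang defect formula plus integrality of the asymptotic isoperimetric constants $\nu_j$, rather than a Gauss-map degree computation), but it contains a genuine gap precisely where you locate the ``main obstacle.'' A four-dimensional M\"uller--\v{S}ver\'ak removable-singularity theorem producing a pointwise expansion $w_j(x)=-m_j\log|x|+O(1)$ with $m_j\in\mathbb{Z}$ is not available, and the paper's Remark 1.6 explains why no such expansion should be expected: $Q_ge^{4w}$ can be arbitrarily close to $\sum_{k\ge 2}k^{-2}\delta_{(k,0,\dots,0)}$, in which case $w$ has logarithmic poles marching off to infinity and admits no expansion with uniformly bounded remainder; the end need not be asymptotic to any cone. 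Since the $\nu_j$ are a priori only nonnegative reals and, by the CQY identity, $\sum_j\nu_j=\chi(M)-\frac{1}{4\pi^2}\int_MQ_g\,dv_g$, asserting $\nu_j\in\mathbb{Z}$ is essentially equivalent to the integrality you are trying to prove, so Step 2 defers the entire difficulty to an unproven black box. (Even granting an expansion, your dictionary is off: $e^{2w}\sim|x|^{2\alpha}$ gives $\nu_j=1+\alpha$, so $w=-m_j\log|x|+O(1)$ would yield $\nu_j=1-m_j$, not $m_j$.) Step 3 is also incorrect as stated: for $M=\mathbb{R}^4\subset\mathbb{R}^5$ one has $\chi(M)=1$, which is odd, so ``$M^4$ bounds mod $2$, hence $\chi(M)$ is even'' fails for the noncompact manifolds at hand; only the joint statement $\chi(M)-\sum_j\nu_j\in2\mathbb{Z}$ is true, and proving it is exactly the Gauss-map degree argument you were trying to avoid.

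For comparison, the paper bypasses all asymptotic expansions of $w$. It writes $Q_g=-\frac{1}{12}\Delta_gR_g+2\sigma_2(A_g)$, kills $\int_M\Delta_gR_g\,dv_g$ by a cut-off argument on Euclidean annuli in each end (Lemma \ref{vanishing}, which uses the Gauss equation $|R_g|\le 2|L|^2$, the volume growth $Vol_g(B^g(p,r))\le Cr^4$ from the normal-metric hypothesis, and an integral --- not pointwise --- estimate $\int_{B^0(0,2\rho)\setminus B^0(0,\rho)}|\partial w|^2\,dx=O(\rho^2)$ derived from the representation formula for $w$), identifies $2\sigma_2(A_g)=3\det(d\vec n)$ for a hypersurface of $\mathbb{R}^5$, and proves $\int_M\det(d\vec n)\,dv_g\in|\mathbb{S}^4|\,\mathbb{Z}$ by choosing radii $r_i$ with $\int_{\partial B^0(0,r_i)}|L|^{3}\,d\sigma_g\to0$ (Lemma \ref{sequence of balls}) and capping the Gauss image of these spheres with small disks (Lemma \ref{det}). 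The factor $8\pi^2=3|\mathbb{S}^4|$ then comes out automatically, with no separate parity step. To repair your outline you would still need this degree argument to obtain integrality of $\chi(M)-\sum_j\nu_j$, at which point the detour through the CQY formula becomes unnecessary.
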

We refer to Definition \ref{normal} for the definition of normal metric, and  referr to Definition \ref{simple ends} for the definition of conformally flat simple end.\\

Theorem \ref{main theorem} is the higher dimensional analog of what is known for the Gaussian curvature on surfaces, which was proved by Chern-Osserman \cite{Chern-Osserman} for minimal surfaces; and was proved by White \cite{White} for general surfaces.\\

\begin{definition} \label{normal}
The metric is  normal on an end $E_j$ of a locally conformally flat manifold if $(E_j,g)=(\mathbb{R}^4\setminus B, e^{2w}|dx|^2)$ and

\begin{equation}\label{w}
w(x)=\frac{1}{4\pi^2}\int_{\mathbb{R}^4\setminus B}\log\frac{|y|}{|x-y|}Q_g(y)e^{4w(y)} dy ,
\end{equation}
where
%$\alpha$ is a constant,
$B$ is a ball with respect to the Euclidean metric.
% and $h(\frac{x}{|x|^2})$ is some biharmonic function on $B^0(0, R)$.
\end{definition}

We remark that the existence of  normal metric is a necessary assumption in this theorem. Without such an assumption, there may exist quadratic functions in the kernel of the bi-Laplacian operator $\Delta^2$ (with respect to the flat metric) for which (\ref{1.3}) fails. Note that the assumption of positive scalar curvature at infinity would imply that the metric is normal in dimension 4 (see for example Proposition 1.12 in \cite{CQY2}), therefore for the purposes of this theorem, it can be replaced the condition of a normal metric.

We adopt the definition from \cite{CQY2} of manifolds with conformally flat simple ends.
\begin{definition} \label{simple ends}
Suppose that $(M^n, g)$ is a complete manifold such that
\begin{equation*}
M^n =N^n \bigcup \left\{ \bigcup_{j=1}^N E_j \right\},
\end{equation*}
where $(N^n, g)$ is a compact manifold with boundary
\begin{equation}
\partial  N^n=   \bigcup_{j=1}^N \partial  E_j ,
\end{equation}
and each $E_j$ is a conformally flat simple end of $M^n$; that is
\begin{equation*}
(E_j, g)=  (\mathbb R^n \setminus B, e^{2w} |dx |^ 2).
\end{equation*}
Here $B$ is a ball with respect to the Euclidean metric.

\end{definition}

%\begin{theorem}\label{c1.2} Suppose $(M^4, g)=(\mathbb{R}^4, e^{2w}|dx|^2)$ is a noncompact complete Riemannian manifold with normal metric. If its $Q$-curvature satisfies \begin{equation} \dstyle \int_{M^4} |Q_g| dv_g< \infty,\end{equation} and \begin{equation}\dstyle \alpha := \frac{1}{4\pi^2}\int_{M^4} Q_g dv_g< 1,\end{equation}then $(M,g)$ satisfies the isoperimetric inequality:\begin{equation}\label{iso}|\Omega|_{g}^{3/4}\leq C |\partial \Omega|_{g}\end{equation}for any smooth bounded domain $\Omega\subseteq M^4$. Here the constant $C $ depends on $M$. The more general Sobolev inequality is also valid, i.e \begin{equation}\label{sob}\dstyle \left( \int_{M^4}|f(x)|^{p^*} \right)^{1/{p^*}}\leq C \dstyle \left(\int_{M^4}(|\nabla_g f(x)|_g)^p  \right)^{1/p},\end{equation}where $1\leq p<4$ and $p^*=\frac{4p}{4-p}$. In particular, when $p=1$, (\ref{sob}) implies (\ref{iso}).\end{theorem}

%\begin{remark}It is known that if $\frac{1}{4\pi^2}\int_{M^4} Q_g dv_g=1$, there is a counterexample to Theorem \ref{c1.2} and Theorem \ref{main}. In fact, one can choose a manifold $M$ with only one end that looks like a cylinder. Then it is not hard to see $\frac{1}{4\pi^2}\int_{M^4} Q_g dv_g=1$. Such a manifold $M$ does not satisfy the isoperimetric inequality, not to mention the quasiconformal map with property (\ref{p}), because the latter would imply the former.\end{remark}

\begin{remark} Note that the model case of the manifold described in Theorem \ref{main theorem} is a cone. However it should be clear that $Q_ge^{4w}$ could be very close to the distribution:
$$\displaystyle  \sum_{k=2}^{\infty}\frac{1}{k^2} \delta_{k},$$
where $\delta_{k}$ denotes the Dirac mass centered at point $x=(k,0,...,0)$.
Therefore, we cannot expect the metric to be close to a cone metric outside any compact set. Nor can we expect that the Ricci curvature of the manifold is bounded. Thus the method by estimating Ricci curvature lower bound cannot be applied here.
\end{remark}

%\begin{remark}\label{r1}The above results are not restricted to the $n=4$ case. They can be generalized to all even dimensions.\end{remark}

%\query{I suggest to remove the above remark and add the follwing}

%\color{red} 
Theorem \ref{main theorem} is not restricted to $4$-dimension. 
But it is technically more complicated in higher dimensions.
%\color{black}

\begin{theorem}\label{mainhigh}
Let $(M^n,g)$ be an even dimensional locally conformally flat complete
manifold with finite total $Q$-curvature and finitely many conformally flat simple ends.
Suppose that on each end, the metric is normal. If $M^n$ is immersed in $\mathbb{R}^{n +1}$ with
\begin{equation}
\dstyle \int_{M^n} |L|^ndv_g< \infty,
\end{equation}
with $L$ being the second fundamental form,
then
$$\int_{M^n} Q_g dv_g\in 2c_n\mathbb{Z},$$
where $c_n= 2 ^ {n -2} (\frac {n -2} 2)! \pi ^ {\frac n 2}$ is the integral of the $Q$-curvature on the standard $n $-hemisphere $\mathbb{S}^n_+$.
\end{theorem}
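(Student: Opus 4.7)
My approach would mirror the proof of Theorem 1.1 and generalize each ingredient to even dimension $n$, the new work being concentrated in the analysis of the immersed end at infinity. First I would invoke the higher dimensional analogue of the Chang--Qing--Yang formula (1.4), valid in every even dimension under the normal metric hypothesis on each conformally flat simple end, to obtain
$$\chi(M^n) \;-\; \frac{1}{c_n}\int_{M^n} Q_g\, dv_g \;=\; \sum_{j=1}^N \nu_j,$$
where $\nu_j$ is the asymptotic isoperimetric invariant of the $j$-th end. Since $\chi(M^n)\in\mathbb{Z}$, the integrality $\int_{M^n} Q_g\, dv_g\in 2c_n\mathbb{Z}$ is equivalent to showing that each $\nu_j$ takes a specific (half-)integer value and that $\chi(M^n)-\sum_j\nu_j$ is even.

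Next I would analyse each normal simple end $E_j\cong(\mathbb{R}^n\setminus B,\,e^{2w}|dx|^2)$ asymptotically. The representation (1.8) gives $w(x)=-\alpha_j\log|x|+O(1)$ as $|x|\to\infty$, with
$$\alpha_j \;=\; \frac{1}{c_n}\int_{E_j} Q_g\, e^{nw}\, dy.$$
Substituting into the definition of $\nu_j$ and computing the limit of the volume-to-boundary ratio expresses $\nu_j$ as an explicit, strictly monotone function of $\alpha_j$, so quantizing $\nu_j$ reduces to quantizing $\alpha_j$.

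To quantize $\alpha_j$ I would exploit the immersion $M^n\hookrightarrow\mathbb{R}^{n+1}$ together with $\int_{M^n}|L|^n\, dv_g<\infty$. After a Kelvin inversion sending infinity to a point $p_j\in\mathbb{R}^{n+1}$, each end becomes a hypersurface accumulating at $p_j$ with bounded total extrinsic curvature. An Allard--Simon type monotonicity argument, leveraging the scale invariance of $\int|L|^n$ and the Michael--Simon Sobolev inequality, would then assign each end an integer density $k_j\in\mathbb{Z}_{\geq 1}$, the number of sheets of the immersion at infinity. The Gauss--Codazzi relations link the intrinsic quantity $\int_{E_j} Q_g\, e^{nw}\, dy$ to the extrinsic multiplicity $k_j$, quantizing $\alpha_j$ (hence $\nu_j$) to the exact values needed for Step 1 to conclude.

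The main obstacle is this last step. In two dimensions the analogous result ($\int K_g\in 2\pi\mathbb{Z}$ for immersed surfaces of finite total curvature) is classical: Huber's theorem conformally compactifies each end to a punctured disk on which the Gauss map is meromorphic, and the multiplicity is read off as a pole order. In dimension $n\geq 4$ no complex analytic substitute is available, and one must establish by geometric measure theory that a complete, locally conformally flat immersed hypersurface with $\int|L|^n<\infty$ and normal conformal factor has a well defined integer density at each end, and that this density matches the exponent $\alpha_j$. Both the normal metric hypothesis and the precise Sobolev critical exponent $n$ in $\int|L|^n$ are essential for this matching, and it constitutes the technical heart of the argument.
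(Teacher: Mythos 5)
There is a genuine gap, and your route also diverges substantially from the paper's. The paper never passes through the Chang--Qing--Yang isoperimetric-deficit formula or the asymptotic exponents $\alpha_j$ of the ends. Instead it uses Alexakis's decomposition $Q(g)=W(g)+\mathrm{div}_iT^i(g)+A\cdot\mathrm{Pfaff}(\Omega)$: the local conformal invariant $W(g)$ vanishes by local conformal flatness; the divergence term integrates to zero by a delicate cut-off/integration-by-parts estimate that uses the convolution representation of $w$ from the normal-metric hypothesis together with $\int|L|^n<\infty$ (Lemma~\ref{vanishing2}); and the Pfaffian equals $(n-1)!!\det(d\vec n)$ by the Gauss equation (Lemma~\ref{Pfaffian high}), whose integral is quantized to $|\mathbb S^n|\,m$ by a degree argument for the Gauss map on an exhaustion chosen so that $\int_{\partial B^0(0,r_i)}|L|^{n-1}\,d\sigma_g\to0$ (Lemmas~\ref{sequence of balls} and~\ref{det}). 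None of the analysis you propose at the level of $\nu_j$ and $\alpha_j$ appears.

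The gap in your plan is precisely the step you yourself flag as the ``technical heart'': assigning an integer density $k_j$ to each end after Kelvin inversion and matching it to $\alpha_j=\frac1{c_n}\int_{E_j}Q_ge^{nw}$. Nothing in the hypotheses forces the individual ends' curvature integrals to be quantized, and no mechanism is given for why Gauss--Codazzi would link $\int_{E_j}Q_ge^{nw}$ to an extrinsic multiplicity; for $n\ge 6$ the $Q$-curvature is not even a pointwise polynomial in $L$ but contains derivative terms of the curvature, which is exactly why the paper must separately kill the divergence piece $\int_M\mathrm{div}_iT^i\,dv_g$ via Lemma~\ref{vanishing2}. Your outline contains no substitute for that cancellation. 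Moreover, even granting that every $\nu_j$ is an integer, your first step would only yield $\int_MQ_g\,dv_g\in c_n\mathbb Z$; the factor of $2$ in the conclusion comes from the fact that the Gauss map of an even-dimensional closed hypersurface has degree $\chi/2$, i.e.\ from the very degree argument (Lemma~\ref{det}) that your approach is designed to avoid. As written, the proposal reduces the theorem to an unproved statement that is at least as hard as the theorem itself.
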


\begin{definition}
The metric is normal on an end $E_j \subset M ^ n$ of a locally conformally flat manifold  $M^n$ if
$(E_j,g)=(\mathbb{R}^n\setminus  B, e^{2w}|dx|^2)$ and
$$w(x)=\frac{1}{c_n}\int_{\mathbb{R}^n\setminus B}\log\frac{|y|}{|x-y|}P(y)dx+C
$$
for some continuous $L^1(\mathbb{R}^n\setminus B)$ function $P(y)$. The dimensional constant $c_n$ defined in Theorem \ref{mainhigh} is also the constant that appears in the fundamental solution equation $(-\Delta)^{n/2}\log\frac{1}{|x|}=c_n\delta_0(x).$
\end{definition}
%We proceed in this paper as follows.In section 2 we review some basic properties of quasiconformal maps, and present Bonk, Heinonen and Saksman's result \cite{bonk}. In section 3 we study the quasiconformal map associated to an auxiliary measure and prove Theorem \ref{main}. This is the main part of the paper. In section 4 we prove Theorem \ref{A1}, giving a much shorter and very different proof of the isoperimetric inequality in the special case $Q\geq 0$. The argument relates to $A_p$ weights, and definitions and preliminaries of $A_p$ weights will be included there. Finally, in section 5 we derive the bi-Lipschitz parametrization in Theorem \ref{bilip} by using Theorem \ref{main}.\\

\noindent \textbf{Acknowledgments:}
The second author is grateful to Alice Chang and Paul Yang for discussions and interest to this work. She would also like to thank Matt Gursky for interest to the work and suggestions.

\hide{
\section{Volume growth of geodesic balls}
We will study in this section that the volume growth of geodesic balls is Euclidean.
There are two different cases.
In Case 1, we suppose $\dstyle \int_M Q^+_g dv_g< 4\pi^2$, and $\dstyle \int_M Q^-_g dv_g<\infty$. Then the volume growth is Euclidean
\begin{equation}
C_2 r^n \leq Vol_g (B^g(0,r))\leq C_1 r^n,
\end{equation}
and the constants are uniformly controlled by $n$ and the integral of $Q$-curvature. More precisely, $C_i$, $i=1,2$ depend only on $n$, $\dstyle 4\pi^2-\int_M Q^+_g dv_g>0$, $\dstyle \int_M Q^-_g dv_g$
This uniform result are derived from strong $A_\infty$ property of the conformal factor $e^{nu}$, which was proved in \cite{YW2}.
In the other case, which we call Case 2, we assume

These are weaker assumptions on the integrals of the $Q$-curvature, $\dstyle \int_M Q_g dv_g< 4\pi^2$, and $\dstyle \int_M |Q_g| dv_g<\infty$.
As a consequence, we show that the volume growth is still Euclidean. However, the constants are not uniformly controlled by the integrals of the $Q$-curvature.
In our proof of Theorem 1, since we need to localize arguments on each end of the locally conformally flat manifold, we will apply
the consequence of Case 2. Case 1 is a stronger result, and is of independent interest. Therefore, we also provide proof in
this section.

This result is going to be used in the proof of Theorem 1.
\begin{proposition}
Let $B^g(0,r)$ be a geodesic ball centered at the origin, with radius $r$ measured by the metric $g$. Then
$$C_2 r^n \leq Vol_g (B^g(0,r))\leq C_1 r^n,$$
where $C_i$, $i=1,2$ depend only on $n$ and $\dstyle \int_M |Q_g| dv_g$.
\end{proposition}
\begin{proof}
By the main theorem of \cite{YW2}, on a conformally flat manifold $M$ with finite total $Q$-curvature and normal metric $g=e^{2w} |dx|^2$,
the isoperimetric inequality is valid. Moreover, the isoperimetric constant is uniformly controlled by $n$ and $\dstyle \int_M |Q_g| dv_g$.
This gives directly the lower bound of volume growth of geodesic balls. Namely, there exists $C_2$, depending only on the isoperimetric constant,
thus only on $n$ and $\dstyle \int_M |Q_g| dv_g$, such that $$C_2 r^n \leq Vol_g (B^g(0,r)).$$
On the other hand, \cite{YW2} also proves that the volume form $e^{nw}$ is a strong $A_\infty$ weight.
We recall the definition of strong $A_\infty$ weights.
\begin{definition}
Given a positive continuous weight $\omega$, we define $\delta_\omega(x,y)$ to be:
\begin{equation}\label{def1}
\delta_\omega(x,y):=\left(\int_{B_{xy}}\omega(z)dz\right)^{1/n},
\end{equation}
where $B_{xy}$ is the ball with diameter $|x-y|$ that contains $x$ and $y$.
\noindent On the other hand, for a continuous function $\omega$, by taking infimum over all rectifiable arc $\gamma\subset B_{xy}$ connecting $x$ and $y$, one can define the $\omega$-distance to be
\begin{equation}\label{def2}d_\omega(x,y):=\inf_{\gamma}\int_\gamma \omega^{\frac{1}{n}}(s)|ds|.\end{equation}
If $\omega$ satisfies
\begin{equation}\label{A}
d_\omega(x,y)\leq C\delta_\omega(x,y);
\end{equation}
and the reverse inequality
\begin{equation}\label{strong A}
\delta_\omega(x,y)\leq Cd_\omega(x,y),
\end{equation}
for all $x,y\in \mathbb{R}^n$, then we say $\omega$ is a strong $A_\infty$ weight, and $C$ is the bound of this strong $A_\infty$ weight.
\end{definition}
The notion of strong $A_\infty$ weight was first proposed by David and Semmes in \cite{DS1}.

Being a strong $A_\infty$ weight, $e^{nw}$ relates the volume of a geodesic sphere and the distance function.
Given a geodesic ball $B^g(0,r)$, the Euclidean diameter of this ball is realized by two points $x$, $y$ on the boundary $\partial B^g(0,r)$.
Thus $B^g(0,r)\subset B^0(x, R)$, where $R$ is the Euclidean distance of $x$, $y$, and $B^0(x,R)$
denotes the Euclidean ball centered at $x$ with radius $R$.

Let us denote by $p$ the middle point of $x$ and $y$. Then by the doubling property of strong $A_\infty$ weight $e^{nw}$,
\begin{equation}
Vol_g(B^g(0,r)\leq  Vol_g(B^0(x, R))  \leq  C_3 Vol_g (B^0(p, R/2)).
\end{equation}
Notice $x$, $y$ lie on $\partial B^0(p, R/2)$, and the line segment between them is the diameter of this Euclidean ball.
By the definition of strong $A_\infty$ weight,
\begin{equation}
Vol_g (B^0(p, R/2))\leq C_4 d_g(x,y)^n.
\end{equation}
Using the triangle inequality, it is obvious that
\begin{equation}d_g(x,y)\leq 2 r. \end{equation}
Thus we obtain
\begin{equation}
Vol_g(B^g(0,r)\leq C_1 r^n.\end{equation}
$C_3$ depends only on the strong $A_\infty$ bound of $e^{nw}$, and $C_4$ depends on $n$.
Thus $C_1$ that is determined by $C_3$ and $C_4$ depends only on $n$ and $\dstyle \int_M |Q_g| dv_g$.
\end{proof}
}

\hide{\section{Asymptotic behavior of geodesic balls}
In this section, we would like to analyze the asymptotic behavior of the volume growth of the geodesic ball.
Let $\bar{w}(x)=\frac{1}{|\partial B(0, |x|)|}\int_{\partial B(0, |x|)} w(p) d\sigma(p)$ is radially symmetric, and the metric $e^{2w}|dx|^2$ is normal.
Let $v=\bar{w}+t$, where $t=\log |x|$ is the cylindrical coordinate.
Then $\bar{w}$ satisfies an ODE
$$ .$$ Is the right hand side equal to $\bar{Q}e^{4\bar{w}}$? No, the right hand side is equal to $\bar{Q e^{4 w}}$. In addition, $\bar{w}$ is smooth.
And does $\bar{w}$ give rise to a
normal metric? Yes, it is a normal metric, proved by changing the order of the convolution.
}
\section {On the integral of divergence terms.}

In the following lemma, we show that the integral of $\Delta_g R_g$ vanishes, assuming that there exists a cut-off function $\eta_\rho$ with its Hessian estimates.
However, the existence of such a cut-off function  is in general not true,
since we do not have Ricci curvature
lower bound. Later, in order to remove this assumption, we will adopt an argument making full
use of the special structure of conformally flat ends.

We start with the following general result
\begin{lemma}\label{vanishing0}
Let $M$ be a $4$-dimensional Riemannian manifold.
Assume that the $Q$-curvature is absolutely integrable; the second fundamental form $L$ is in $L^4(M)$; and the metric is  normal on each end $E_j$.
For a fixed point $p$, assume  also that there exists a smooth cut-off function $\eta_\rho$ which is supported on the geodesic ball $B^g(p, 2\rho)$; it is equal to $1$ on $B^g(p, \rho)$; its gradient
is of order $O(1/\rho)$ on $B^g(p, 2\rho)\setminus B^g(p, \rho)$; and its Hessian is of order $O(\frac 1{\rho^2})$ on$B^g(p, 2\rho)\setminus B^g(p, \rho)$.
Then $$\int_{M} \Delta_g R_g dv_g=0.$$
\end{lemma}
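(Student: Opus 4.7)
The plan is to interpret $\int_M \Delta_g R_g\, dv_g$ as $\lim_{\rho\to\infty}\int_M \eta_\rho\, \Delta_g R_g\, dv_g$, move both derivatives onto the cut-off by integration by parts, and then show the resulting annular integral vanishes by combining the hypersurface structure, the $L^4$ bound on $L$, and Euclidean volume growth. Since $\eta_\rho$ is smooth with compact support and $\Delta_g$ is self-adjoint on $C_c^\infty(M)$, two integrations by parts yield
\[
\int_M \eta_\rho\, \Delta_g R_g\, dv_g \;=\; \int_M R_g\, \Delta_g \eta_\rho\, dv_g \;=\; \int_{A_\rho} R_g\, \Delta_g \eta_\rho\, dv_g,
\]
where $A_\rho := B^g(p,2\rho)\setminus B^g(p,\rho)$ is the support of $\Delta_g \eta_\rho$, and where the hypothesis gives $|\Delta_g \eta_\rho| = O(\rho^{-2})$.

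Next I would use the immersion $M^4\hookrightarrow \R^5$. For a codimension-one submanifold the Gauss equation expresses the full Riemann tensor quadratically in $L$, so in particular $|R_g|\le C|L|^2$ pointwise. Combined with Cauchy--Schwarz,
\[
\left|\int_{A_\rho} R_g\, \Delta_g \eta_\rho\, dv_g\right| \;\le\; \frac{C}{\rho^2}\int_{A_\rho}|L|^2\, dv_g \;\le\; \frac{C}{\rho^2}\,\|L\|_{L^4(A_\rho)}^2\,\vol_g(A_\rho)^{1/2}.
\]
The last ingredient is the Euclidean volume growth $\vol_g(B^g(p,\rho))\le C\rho^4$ on each conformally flat simple end. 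This does not follow from a Ricci lower bound (which we do not have here), but rather from normality of the metric together with finiteness of the total $Q$-curvature, which, by the second author's earlier work on conformally flat manifolds, makes the conformal factor $e^{4w}$ a strong $A_\infty$ weight and hence forces the geodesic ball volume to be comparable to the Euclidean one. Granted this, $\vol_g(A_\rho)^{1/2}\le C\rho^2$ cancels the $\rho^{-2}$ factor, leaving
\[
\left|\int_M \eta_\rho\, \Delta_g R_g\, dv_g\right| \;\le\; C\,\|L\|_{L^4(A_\rho)}^2 \;\longrightarrow\; 0 \quad \text{as } \rho\to\infty,
\]
by the global $L^4$-integrability of $L$ applied to the exterior annuli, which have vanishing measure-concentration in the limit.

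The main obstacle is the volume-growth step: without curvature bounds one cannot use Bishop--Gromov, so the argument fundamentally rests on the strong $A_\infty$ property of $e^{4w}$, which itself requires normality of the metric on each end. A minor secondary point is that $\eta_\rho$ is centered at a fixed $p$, so for large $\rho$ the annulus $A_\rho$ is contained in the union of the ends $E_j$, where normality applies end by end; the compact core $N$ in the decomposition $M=N\cup\bigcup_j E_j$ contributes nothing to the limit. Putting these ingredients together yields $\int_M \Delta_g R_g\, dv_g = 0$, as desired.
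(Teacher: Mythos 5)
Your proposal is correct and follows essentially the same route as the paper's own proof: integrate by parts twice onto $\eta_\rho$, bound $|R_g|\le C|L|^2$ by the Gauss equation, apply Cauchy--Schwarz on the geodesic annulus, and invoke the Euclidean volume growth $\vol_g(B^g(p,r))\le Cr^4$ (which the paper draws from the second author's earlier work \cite{YW1} via quasiconformality at infinity, equivalent in substance to the strong $A_\infty$ property you cite) so that the tail of $\int_M|L|^4\,dv_g$ forces the limit to vanish. The end-by-end treatment of the annulus for large $\rho$ matches the paper's handling of multiple ends.
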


\begin{proof}[Proof of Lemma~\ref{vanishing0}]
Since the $Q$-curvature is absolutely integrable, so is $\Delta_gR$.
By using the smooth cut-off function  $\eta_\rho$,  we have
\begin{equation}\label{vanishing0Eq1}\begin{split}
\int_{M} \Delta_g R_g dv_g=&\lim_{\rho\rightarrow\infty} \int_{B^g(p, 2\rho)} \Delta_g R_g \eta_\rho dv_g \\
=&\lim_{\rho\rightarrow\infty} \int_{B^g(p, 2\rho)\setminus B^g(p, \rho)}  R_g \Delta_g\eta_\rho dv_g.\\
\end{split}\end{equation}

Let us assume for the moment that $M$ only has one end, and that the end is conformally flat, given by $E_1=(\mathbb R^4\setminus K, e^{2w}|dx|^2).$
By the Gauss equation, $|R_g| \leq 2 |L|^2$. Thus the above quantity is bounded by
\begin{equation}\begin{split}&\lim_{\rho\rightarrow\infty}( \int_{B^g(p, 2\rho)\setminus B^g(p, \rho)}  |L|^4 dv_g)^{1/2} \cdot
( \int_{B^g(p, 2\rho)\setminus B^g(p,\rho)}|\Delta_g\eta_\rho|^2 dv_g )^{1/2}\\
\leq& \lim_{\rho\rightarrow\infty}( \int_{B^g(p, 2\rho)\setminus B^g(p, \rho)}  |L|^4 dv_g)^{1/2} \cdot
(\frac{Vol_g(B^g(p, 2\rho)\setminus B^g(p, \rho))}{\rho^4} )^{1/2}.\\
\end{split}
\end{equation}
Here the volume is with respect to the metric $g$.
By the previous result in \cite{YW1}, if the metric is normal, and the $Q$-curvature is absolutely integrable, then there is  a quasiconformal mapping at infinity on each end, and thus $Vol_g(B^ g(p,r))\leq Cr^4$.

Therefore we have the volume growth estimate
$$
\frac{Vol_g(B^g(p, 2\rho)\setminus B^g(p, \rho))}{\rho^4} \leq C .$$
Then by the $L^4$-integrability assumption of the second fundamental form, the limit tends to $0$.

If $M$ has multiple ends, then
\begin{equation}\begin{split}&\int_{M^4}\Delta_gR_gd v_g\\
=&\lim_{\rho\rightarrow \infty}\int_{B^g(p,2\rho)\setminus B^g(p,\rho)} R_g \Delta_g \eta_\rho dv_g\\
=&\lim_{\rho\rightarrow \infty}\sum_{j=1}^N \int_{(B^g(p,2\rho)\setminus B^g(p,\rho))\cap E_j}R_g \Delta_g \eta_\rho dv_g.\\
\end{split}
\end{equation}
On each $E_j$, we can apply the above argument to obtain that
\begin{equation}\begin{split}&\lim_{\rho\rightarrow \infty}|\int_{(B^g(p,2\rho)\setminus B^g(p,\rho))\cap E_j} R_g \Delta_g \eta_\rho dv_g|\\
\leq &\lim_{\rho\rightarrow \infty}(\int_{(B^g(p,2\rho)\setminus B^g(p,\rho))\cap E_j} |L |^4dv_g)^{1/2}\\
&\hspace{55mm}\cdot(\int_{(B^g(p,2\rho)\setminus B^g(p,\rho))\cap E_j} |\Delta_g \eta_\rho|^2dv_g)^{1/2}\\
\leq& \lim_{\rho\rightarrow\infty}( \int_{(B^g(p,2\rho)\setminus B^g(p,\rho))\cap E_j}  |L|^4 dv_g)^{1/2}\\
& \hspace{50mm} \cdot (\frac{Vol_g((B^g(p, 2\rho)\setminus B^g(p, \rho))\cap E_j)}{\rho^4} )^{1/2}.\\
\end{split}
\end{equation}
As a direct corollary of Theorem 1.5 in \cite{YW1}, on each end $E_j$, we have
$$
\frac{Vol_g((B^g(p, 2\rho)\setminus B^g(p, \rho))\cap E_j)}{\rho^4} \leq C .$$
Therefore,
by the $L^4$-integrability assumption of the second fundamental form, the limit tends to $0$.

\end{proof}
\hide {
So $\int_{M^4}|Q_g| d v_g<\infty$. This implies $\int_{M^4}Q_g d v_g\leq 4\pi^2\chi(M)$ and $w=\int_{\mathbb R^n\setminus B}\log\frac{|y|}{|x-y|} * Q_ge^{4w} d y$. Therefore there is  a quasiconformal mapping at infinity, and thus $Vol_g(B_r)\leq Cr^n$.}

As we mentioned earlier, on a locally conformally flat manifold with finite total $Q$-curvature, there is no Ricci curvature lower bound. Therefore, we do not have the existence of cut-off functions with Hessian bound as described in the previous lemma. In order to overcome this difficulty, we will make use of the conformal structure
to obtain a different exhaustion of the manifold.

\begin{lemma}\label{vanishing}
Let $M$ be the $4$-dimensional manifold defined 
in Theorem~\ref{main theorem}.
Suppose the $Q$-curvature is absolutely integrable, the second fundamental form $L$ is in $L^4(M)$, and the metric is normal on each end.
Then $$\int_{M} \Delta_g R_g dv_g=0.$$
\end{lemma}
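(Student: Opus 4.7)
The plan is to replace the geodesic cut-off of Lemma~\ref{vanishing0} by one coming from the conformal structure on the ends, so that Hessian-type control becomes an explicit computation in flat coordinates rather than a Ricci lower bound assumption. On each simple end $E_j=(\mathbb{R}^4\setminus B,e^{2w}|dx|^2)$ I take $\eta_\rho(x)=\chi(|x|/\rho)$, where $\chi$ is a fixed smooth function with $\chi\equiv1$ on $[0,1]$ and $\chi\equiv0$ on $[2,\infty)$ and $|x|$ is Euclidean; for $\rho$ large these glue to the constant function $1$ on the compact core $N$ to give a compactly supported $\eta_\rho$ on $M$ with $\eta_\rho\to1$ pointwise. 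Using that $\Delta_g R_g\in L^1(M,dv_g)$ (which follows from absolute integrability of $Q_g$ together with $|R_g|^2,|E|^2\lesssim|L|^4\in L^1$ via the definition of $Q_g$), dominated convergence and integration by parts give
\[
\int_M \Delta_g R_g\,dv_g=\lim_{\rho\to\infty}\int_M R_g\,\Delta_g \eta_\rho\,dv_g,
\]
with no boundary contribution because $\eta_\rho$ has compact support. It remains to show the right side is zero.

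The payoff of the Euclidean cut-off is that the four-dimensional identities $\Delta_g u=e^{-2w}(\Delta_0 u+2\nabla_0 w\cdot\nabla_0 u)$ and $dv_g=e^{4w}dx$ combine on each end into the $w$-free formula
\[
\int_{E_j}|\Delta_g \eta_\rho|^2\,dv_g=\int_{\{\rho\le|x|\le 2\rho\}}\bigl(\Delta_0 \eta_\rho+2\nabla_0 w\cdot\nabla_0 \eta_\rho\bigr)^2 dx.
\]
The first term is $O(1)$ because $|\Delta_0\eta_\rho|\lesssim \rho^{-2}$ on an annulus of Euclidean volume $\sim\rho^4$. For the cross term I differentiate the normal representation
\[
w(x)=\frac{1}{4\pi^2}\int\log\frac{|y|}{|x-y|}\,Q_g(y)e^{4w(y)}\,dy
\]
to obtain $|\nabla_0 w(x)|\le c\int|x-y|^{-1}|Q_g(y)|e^{4w(y)}dy$; Cauchy--Schwarz applied to this convolution against the $L^1$ density $|Q_g|e^{4w}$, together with the four-dimensional bound $\int_{|z|\le 3\rho}|z|^{-2}dz\sim \rho^2$, yields $\int_{\rho\le|x|\le 2\rho}|\nabla_0 w|^2 dx\lesssim \rho^2$, and hence $\int|\nabla_0 w|^2|\nabla_0\eta_\rho|^2 dx\lesssim 1$. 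Thus $\int_M|\Delta_g \eta_\rho|^2\,dv_g\le C$ uniformly in $\rho$.

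Combining this uniform bound with Cauchy--Schwarz and the Gauss equation $|R_g|\le 2|L|^2$ gives
\[
\left|\int_M R_g\,\Delta_g \eta_\rho\,dv_g\right|\le C\Bigl(\int_{A_\rho}|L|^4\,dv_g\Bigr)^{1/2},
\]
where $A_\rho\subset\bigcup_j\{\rho\le|x|\le 2\rho\}$ is the support of $\nabla_0\eta_\rho$, an annular region that exhausts the ends away from any compact set. The hypothesis $L\in L^4(M,dv_g)$ then forces the right side to tend to $0$ as $\rho\to\infty$, completing the proof. The main obstacle in this scheme is the treatment of the cross term $\nabla_0 w\cdot\nabla_0\eta_\rho$: this is the only place where normality of the metric enters essentially, and the potential-theoretic estimate for $\nabla_0 w$ must be carried out carefully because $Q_g e^{4w}$ is only absolutely convergent, not locally bounded in a controlled way.
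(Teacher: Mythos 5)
Your proposal is correct and follows essentially the same route as the paper's proof: a Euclidean annular cut-off on each conformally flat end, integration by parts, the Gauss equation $|R_g|\le 2|L|^2$ to reduce to the $L^4$-norm of the second fundamental form, and the potential-theoretic estimate $\int_{\{\rho\le|x|\le 2\rho\}}|\nabla_0 w|^2\,dx\lesssim\rho^2$ derived from the normal-metric representation. The only (cosmetic) difference is that you apply Cauchy--Schwarz once to the full quantity $\int R_g\,\Delta_g\eta_\rho\,dv_g$ after establishing the uniform bound $\int_M|\Delta_g\eta_\rho|^2\,dv_g\le C$, whereas the paper first splits $\Delta_g\eta_\rho\,dv_g=\bigl(\Delta_0\eta_\rho\,e^{2w}+\partial_i(e^{2w})\partial_i\eta_\rho\bigr)dx$ into two terms and estimates each separately.
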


\begin{proof}[Proof of Lemma~\ref{vanishing}]
Let $B^0(0, \rho)$ be the ball centered at the origin, with radius $\rho$ with respect
to the Euclidean metric. On the Euclidean space, there always
exists a smooth cut-off function $\eta_\rho$ which is supported on $B^0(0, 2\rho)$.
It is equal to $1$ on $B^0(0, \rho)$, and its $k$-th derivative
is of order $O(1/\rho^k)$ over the annulus $B^0(0, 2\rho)\setminus B^0(0, \rho)$.
Again since the $Q$-curvature is absolutely integrable, so is $\Delta_gR_g$.

Suppose $M^4$ has one end $E_1$ first.
Let $\eta_\rho=1$ on $N$.
Then
\begin{equation}\label{2.3}
\begin{split}
&\int_{M}\Delta_gR_g d v_g\\
=&\lim_{\rho\rightarrow\infty} \int_{N\cup ( B^0(0, 2\rho) \cap E_1)}\Delta_gR_g \eta_\rho d v_g\\
=&\lim_{\rho\rightarrow\infty} \int_{B^0(0, 2\rho)\setminus B^0(0, \rho)}R_g \Delta_g \eta_\rho d v_g.\\
\end{split}\end{equation}
Here the last equality is because all boundary terms in the integration by parts formula vanish,
and $\Delta_g\eta_\rho=0$ on the complement of $B^0(0, 2\rho)\setminus B^0(0, \rho)$.

Using
$$ dv_g=e^{4w}dx,$$
$$\Delta_g\eta_\rho dv_g=\partial_i(e^{2w}\partial_i\eta_\rho) dx, $$ we have
\begin{equation} \begin{split}
\int_{B^0(0, 2\rho)\setminus B^0(0, \rho)}  R_g \Delta_g\eta_\rho dv_g=&
\int_{B^0(0, 2\rho)\setminus B^0(0, \rho)}  R_g \partial_i(e^{2w}\partial_i\eta_\rho )dx \\
=& \int_{B^0(0, 2\rho)\setminus B^0(0, \rho)}  R_g (\Delta_0\eta_\rho e^{2w}+ \partial_i(e^{2w})\partial_i\eta_\rho )dx \\
\leq&  C\int_{B^0(0, 2\rho)\setminus B^0(0, \rho)}   \frac{R_g}{\rho^2} e^{2w}dx\\
&   + C \int_{B^0(0, 2\rho)\setminus B^0(0, \rho)}  \frac{R_g |\partial_i w|}{\rho}e^{2w}dx \\
=:& I+II.\\
\end{split} \end{equation}
The first term $I$ can be bounded by the $L^4$-norm of the second fundamental form.
\begin{equation}\begin{split}
| I|\leq&C (\int_{B^0(0, 2\rho)\setminus B^0(0, \rho)} | R_g|^2 e^{4w}dx)^{1/2}\cdot
(\int_{B^0(0, 2\rho)\setminus B^0(0, \rho)} \frac{1}{\rho^4}dx)^{1/2}\\
\leq&C(\int_{B^0(0, 2\rho)\setminus B^0(0, \rho)} | L|^4 dv_g)^{1/2}\rightarrow 0,\\
\end{split}\end{equation}
as $\rho$ tends to $\infty$.

We will now study $II$ through the asymptotic behavior of the derivatives of $w$. We notice that the pointwise estimate of
$\partial_i w$ is not valid. But since we are taking the integral over the annulus (with respect to the Euclidean metric),
it can be reduced to the integral estimate of $\partial_i w$ over spheres at the end of the manifold.
\begin{equation}\begin{split}
|II|=&C \left|\int_{B^0(0, 2\rho)\setminus B^0(0, \rho)}  \frac{R_g \partial_i w}{\rho}e^{2w}dx\right|\\
\leq&C (\int_{B^0(0, 2\rho)\setminus B^0(0, \rho)} | R_g|^2 e^{4w}dx)^{1/2}\cdot
(\int_{B^0(0, 2\rho)\setminus B^0(0, \rho)} \frac{|\partial_i w|^2}{\rho^2}dx)^{1/2}\\
\leq&C(\int_{B^0(0, 2\rho)\setminus B^0(0, \rho)} | L|^4 dv_g)^{1/2}\cdot
(\int_{B^0(0, 2\rho)\setminus B^0(0, \rho)} \frac{|\partial_i w|^2}{\rho^2}dx)^{1/2}.\\
\end{split}\end{equation}
Notice that
\begin{equation}\begin{split}\label{3.1}
&  \int_{B^0(0, 2\rho)\setminus B^0(0, \rho)} |\partial_i w|^2dx\\
=&\int_{B^0(0, 2\rho)\setminus B^0(0, \rho)}\left|\frac1{4\pi^2}\int_{\mathbb R^4}\frac{x_i-y_i}{|x-y|^2}Q e^{4w(y)} d y\right|^2 d v_0\\
\leq&C \int_{B^0(0, 2\rho)\setminus B^0(0, \rho)} \left| \int_{\mathbb{R}^4}\frac{1}{|x-y|}Q(y)e^{4w(y)} dy\right|^2dx\\
\leq&C \int_{B^0(0, 2\rho)\setminus B^0(0, \rho)}\int_{\mathbb{R}^4}\frac{1}{|x-y|^2}  Q(y)e^{4w(y)} dydx \cdot \int_{\mathbb{R}^4} Q(y)e^{4w(y)} dy.\\
\end{split}\end{equation}
Since for any $y\in \mathbb{R}^4$, we have
$$\int_{x\in \partial B^0(0, r)} \frac{1}{|x-y|^2} d\sigma(x)=| \partial B^0(0, r)|\cdot O(\frac{1}{r^2}),$$
\begin{equation} \begin{split}\int_{B^0(0, 2\rho)\setminus B^0(0, \rho)}\frac{1}{|x-y|^2} dx=&\int_\rho^{2\rho}
\int_{x\in \partial B^0(0, r)} \frac{1}{|x-y|^2} d\sigma(x)dr\\
=&\int_\rho^{2\rho}| \partial B^0(0, r)|\cdot O(\frac{1}{r^2})
dr=O(\rho^2).\\\end{split}
\end{equation}
Plugging this into (\ref{3.1}), and using the fact that  $\int_{\mathbb{R}^4}Q(y)e^{4w(y)} dy<\infty$,
we obtain
\begin{equation}\begin{split}
\int_{B^0(0, 2\rho)\setminus B^0(0, \rho)} |\partial_i w|^2dx
\leq C (\int_{\mathbb{R}^4}Q(y)e^{4w(y)} dy)^2 \cdot O(\rho^2)= O(\rho^2).
\end{split}\end{equation}
Therefore,
\begin{equation}\begin{split}
| II|\leq&C(\int_{B^0(0, 2\rho)\setminus B^0(0, \rho)} | L|^4 dv_g)^{1/2}\cdot
(\frac{1}{\rho^2}\int_{B^0(0, 2\rho)\setminus B^0(0, \rho)} |\partial_i w|^2dx)^{1/2}\\
\leq&C(\int_{B^0(0, 2\rho)\setminus B^0(0, \rho)} | L|^4 dv_g)^{1/2}\rightarrow 0\\
\end{split}\end{equation}
as $\rho$ tends to $\infty$.
To conclude,
\begin{equation}\begin{split}
|\int_{M^4} \Delta_g R_g dv_g|=&\lim_{\rho\rightarrow\infty}| \int_{B^0(0, 2\rho)\setminus B^0(0, \rho)} R_g \Delta_g \eta_\rho dv_g| \\
\leq& \lim_{\rho\rightarrow\infty} |I|+|II|=0.\\  \end{split}\end{equation}
In general, $M$ has finitely many simple ends
$$M= N\bigcup \{\bigcup_j E_j \}.$$ We define $\eta_\rho$ to be
equal to 1 on $N$, $\eta_\rho =1$ on $E_j\cap B^0(0,\rho)$, $\eta_\rho =0$ on $E_j\setminus B^0(0,2 \rho)$, and its $k$-th derivative is of order $O (1/ \rho^ k)$ on the annulus $B ^ 0 (0, 2 \rho)\setminus B ^ 0 (0, \rho)$. Then \eqref{2.3} becomes
\begin{equation}\label{2.4}
\begin{split}
\int_{M^4}\Delta_gR_g d v_g=&\int_{N^4}\Delta_gR_g \eta_\rho d v_g+\sum_{j} \lim_{\rho\rightarrow\infty}\int_{B^0(0, 2\rho) \cap E_j}\Delta_gR_g \eta_\rho d v_g\\
=&\sum_{j} \lim_{\rho\rightarrow\infty} \int_{(B^0(0, 2\rho)\setminus B^0(0, \rho))  \cap E_j}  R_g \Delta_g\eta_\rho dv_g.
\end{split}\end{equation}
We now use the argument for manifold with only one end to show that on each end
\begin{equation*}
\lim_{ \rho\rightarrow\infty} \int_{(B^0(0, 2 \rho)\setminus B^0(0,  \rho))\cap E_j}  R_g \Delta_g \eta_\rho dv_g= 0
.\end{equation*}
This completes the proof of the lemma.
\end{proof}

\section{Proof of Theorem \ref{main theorem}}
\hide{1. The integral of the $Q$-curvature is an integral multiple of $4\pi^2$, when the ambient space is $R^5$. \\
2. The normal map extends continuously
to the end of the manifold.\\
}

We begin this section with a lemma asserting that there exists a sequence of domains such that the integral of the second fundamental form over the boundary tends to zero. This lemma is analogous to the lemma in~\cite{White}*{\S 2}. But unlike the 2-dimensional case, we do not have estimate of the area of the geodesic spheres. We circumvent this difficulty by exploring the conformal structure at the end.
\begin{lemma}\label{sequence of balls} 
Assume that  $M^n$ is an $n$-dimensional complete Riemannian manifold immersed in $\mathbb{R}^{n+1}$ with finitely many conformally flat simple ends and $\displaystyle \int_{M} |L|^n dv_g <+\infty$. Then on each end $E_j$ there exists a sequence $r_i\rightarrow \infty$, such that
$$\int_{\partial B^0(0,r_i)}|L|^{n-1} d\sigma_g \rightarrow0,$$
where $B^0(0,r_i)$ denotes the ball of radius $r_i$ with respect to the Euclidean distance; $d\sigma_g$ denotes
the area form on $\partial B^0(0,r_i)$ using the metric $g$.
\end{lemma}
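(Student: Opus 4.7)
\textbf{Proof plan for Lemma~\ref{sequence of balls}.} The plan is to restrict everything to a single end $E_j=(\mathbb{R}^n\setminus B,e^{2w}|dx|^2)$ (it suffices to produce the sequence on one end at a time, since there are finitely many) and then run a co-area argument on Euclidean annuli, combined with H\"older's inequality, to force the boundary integrals to decay along a subsequence.

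First I would record the basic conformal identities: on the Euclidean sphere $\partial B^0(0,r)\subset E_j$ the induced area form of $g$ satisfies $d\sigma_g=e^{(n-1)w}\,d\sigma_0$, and on the ambient space $dv_g=e^{nw}\,dx$. Consequently the integral we are trying to control reads
\begin{equation*}
\int_{\partial B^0(0,r)}|L|^{n-1}\,d\sigma_g
=\int_{\partial B^0(0,r)}\bigl(|L|e^{w}\bigr)^{n-1}\,d\sigma_0.
\end{equation*}

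Next I would apply the Euclidean co-area formula on the annulus $A(r):=B^0(0,2r)\setminus B^0(0,r)$ and H\"older's inequality with exponents $n/(n-1)$ and $n$:
\begin{equation*}
\int_{r}^{2r}\!\!\int_{\partial B^0(0,s)}\!(|L|e^{w})^{n-1}\,d\sigma_0\,ds
=\int_{A(r)}(|L|e^{w})^{n-1}\,dx
\leq\Bigl(\int_{A(r)}|L|^{n}\,dv_g\Bigr)^{\!(n-1)/n}\!\cdot|A(r)|^{1/n}.
\end{equation*}
Since $|A(r)|^{1/n}\leq Cr$ and $\int_{E_j}|L|^n\,dv_g<\infty$ implies $\int_{A(r)}|L|^n\,dv_g\to 0$ as $r\to\infty$, the right-hand side is $o(r)$.

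Dividing the resulting estimate by the radial length of the annulus, the mean value theorem (pigeonhole) produces some $s_r\in[r,2r]$ with
\begin{equation*}
\int_{\partial B^0(0,s_r)}|L|^{n-1}\,d\sigma_g
\leq\frac{1}{r}\int_{A(r)}(|L|e^{w})^{n-1}\,dx=o(1).
\end{equation*}
Letting $r\to\infty$ along any sequence yields radii $r_i:=s_{r_i}\to\infty$ with the desired decay. Repeating on each of the finitely many ends finishes the lemma. The argument is essentially routine once one writes $d\sigma_g$ in terms of $d\sigma_0$; the only mild subtlety worth being careful about is that we must work with \emph{Euclidean} annuli (so the co-area formula is applied in the flat background), not geodesic ones for $g$, since the paper emphasizes that no geodesic volume control is available without further input. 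This is exactly the point where we exploit the conformally flat structure of the end.
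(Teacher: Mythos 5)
Your argument is correct and follows essentially the same route as the paper: both exploit the Euclidean coordinates on each end, apply H\"older with exponents $\tfrac{n}{n-1}$ and $n$ to the factorization $|L|^{n-1}e^{(n-1)w}=(|L|e^{w})^{n-1}$, and use the finiteness of $\int_M|L|^n\,dv_g$ to extract the sequence of radii. The only cosmetic difference is that the paper applies H\"older slice-by-slice on the spheres $\partial B^0(0,r)$ and concludes from the integrability of $r^{-1}\bigl(\int_{\partial B^0(0,r)}|L|^{n-1}d\sigma_g\bigr)^{n/(n-1)}$ over $[r_0,\infty)$, whereas you apply it on dyadic annuli and pigeonhole; both are valid.
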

\begin{remark}
Note that we do not need to assume the metric is normal in Lemma \ref{sequence of balls}.
\end{remark}

\begin{proof}[Proof of Lemma \ref{sequence of balls}]
On the end $E_j=\mathbb{R}^n \setminus B$,
\begin{equation}\begin{split}
\int_{\partial B^0(0,r)} |L|^{n-1} e^{(n-1)w}d\sigma_0 \leq &
\dstyle  \left( \int_{\partial B^0(0,r)} |L|^n e^{nw} d\sigma_0 \right )^{\frac{n-1}{n}} \cdot  \left(\int_{\partial B^0(0,r)} d\sigma_0\right )^{1/n}\\
=& C \dstyle  \left( \int_{\partial B^0(0,r)} |L|^n e^{nw} d\sigma_0 \right)^{\frac{n-1}{n}} \cdot r^{\frac{n-1}{n}},\\
\end{split}
\end{equation}
where $d\sigma_0$ denotes the area form of $\partial B^0(0,r)$ with respect to the Euclidean metric. $d\sigma_g=e^{(n-1)w}d\sigma_0$.

Thus
\begin{equation}
r^{-1} \dstyle \left( \int_{\partial B^0(0,r)} |L|^{n-1} e^{(n-1)w} d\sigma_0 \right)^{\frac{n}{n-1}} \leq C  \int_{\partial B^0(0,r)} |L|^n e^{nw} d\sigma_0 .
\end{equation}
On the $j$-th end, let $r_0$ being the smallest number such that $B \subset B ^ 0 (0, r _0)$.
We now integrate $r$ between $[r_0, +\infty)$,
\begin{equation}\label{seqn}
\begin{split}
\dstyle \int_{r_0}^{\infty} r^{-1} \dstyle \left( \int_{\partial B^0(0,r)} |L|^{n-1} e^{(n-1)w} d\sigma_0 \right)^{\frac{n}{n-1}} dr
\leq & C \int_{\mathbb{R}^n\setminus B} |L|^n e^{nw} dx\\
\leq& C   \int_{M} |L|^n dv_g<\infty.\\
\end{split}
\end{equation}
Therefore, there exists a sequence $\{r_i\}\rightarrow \infty$ such that $$\int_{\partial B^0(0,r_i)}|L|^{n-1} d\sigma_g \rightarrow0,$$
because if not, the left hand side of (\ref{seqn}) is not integrable.
\end{proof}

\begin{lemma}\label{det}
Let $M$ be the manifold defined in the previous lemma. 
Let $\vec n$ be the Gauss map
$
M\to \mathbb S^n.
$
Assume that   $\displaystyle \int_M |L|^n dv_g<+\infty$. Then
\begin{equation}
\displaystyle \int_{M^n} \det (d \vec n)= |\mathbb{S}^n|\cdot m
\end{equation}
for some integer $m$, where $|\mathbb S^n|$ is the volume of the unit sphere.
\end{lemma}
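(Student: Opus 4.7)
The plan is to interpret $\int_{M^n} \det(d\vec n)\,dv_g$ as a limit of signed spherical volumes swept out by the Gauss map over a compact exhaustion of $M^n$ whose boundaries become asymptotically invisible to $\vec n$, and then to realize each term in the limit as an integer multiple of $|\mathbb{S}^n|$ via the degree of a Gauss-map extension to a closed $n$-manifold. This is the higher-dimensional analogue of White's Gauss-Bonnet argument in dimension $2$, with topological degree theory replacing the Gauss-Bonnet identity.

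\textbf{Exhaustion.} On each conformally flat simple end $E_j$, Lemma~\ref{sequence of balls} furnishes radii $r_i^{(j)} \to \infty$ with $\int_{\partial B^0(0,r_i^{(j)})\cap E_j} |L|^{n-1}\,d\sigma_g \to 0$. Set $\Omega_i := N \cup \bigcup_j \bigl(E_j \cap \overline{B^0(0,r_i^{(j)})}\bigr)$, a compact $n$-manifold with boundary $\partial \Omega_i = \bigsqcup_j \Sigma_i^{(j)}$, each $\Sigma_i^{(j)}$ a topological $(n-1)$-sphere. Since $|\det(d\vec n)|$ is dominated by $n^{-n/2}|L|^n$ (AM-GM on principal curvatures), $\det(d\vec n) \in L^1(M)$ and dominated convergence yields $\int_{\Omega_i}\det(d\vec n)\,dv_g \to \int_{M^n}\det(d\vec n)\,dv_g$.

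\textbf{Capping.} From $|d\vec n|\leq C|L|$ pointwise, the $(n-1)$-Hausdorff measure satisfies $\mathcal{H}^{n-1}(\vec n(\partial\Omega_i)) \leq C\int_{\partial\Omega_i}|L|^{n-1}\,d\sigma_g \to 0$. Choose $\epsilon_i$ with $\epsilon_i^{n-1} = \bigl(\int_{\partial\Omega_i}|L|^{n-1}d\sigma_g\bigr)^{1/2}$; then the $\epsilon_i$-tube around $\vec n(\partial\Omega_i)$ has volume of order $\epsilon_i\cdot\int_{\partial\Omega_i}|L|^{n-1}d\sigma_g \to 0$, so one can select $p_i \in \mathbb{S}^n$ with $d(p_i,\vec n(\partial\Omega_i)) \geq \epsilon_i$. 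Using stereographic projection $\mathbb{S}^n\setminus\{p_i\} \cong \mathbb{R}^n$ and a radial/Tietze extension in $\mathbb{R}^n$, extend each $\vec n|_{\Sigma_i^{(j)}}$ smoothly to $\phi_i^{(j)}: D_i^{(j)} \to \mathbb{S}^n\setminus\{p_i\}$, and glue to obtain a smooth closed $n$-manifold $\tilde\Omega_i = \Omega_i \cup \bigsqcup_j D_i^{(j)}$ and a smooth map $\tilde{\vec n}_i: \tilde\Omega_i \to \mathbb{S}^n$. Since $\pi_{n-1}(\mathbb{S}^n)=0$ for $n\geq 2$, the gluing produces a well-defined map, and
\begin{equation*}
\int_{\tilde\Omega_i}\tilde{\vec n}_i^{\ast}\omega_{\mathbb{S}^n} = \deg(\tilde{\vec n}_i)\cdot|\mathbb{S}^n| \in |\mathbb{S}^n|\cdot\mathbb{Z}.
\end{equation*}

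\textbf{Cap estimate and conclusion.} On $\mathbb{S}^n\setminus\{p_i\}$, write $\omega_{\mathbb{S}^n}=d\alpha_i$ for a primitive satisfying $|\alpha_i(q)|\leq C/d(q,p_i)^{n-1}$. By Stokes',
\begin{equation*}
\int_{D_i^{(j)}}(\phi_i^{(j)})^{\ast}\omega_{\mathbb{S}^n} = \int_{\Sigma_i^{(j)}} \vec n^{\ast}\alpha_i,
\end{equation*}
and the pointwise bound $|\vec n^{\ast}\alpha_i|\leq C\epsilon_i^{-(n-1)}|L|^{n-1}$ on $\Sigma_i^{(j)}$ gives
\begin{equation*}
\Bigl|\sum_j \int_{D_i^{(j)}}(\phi_i^{(j)})^{\ast}\omega_{\mathbb{S}^n}\Bigr| \leq C\,\epsilon_i^{-(n-1)}\int_{\partial\Omega_i}|L|^{n-1}\,d\sigma_g = C\Bigl(\int_{\partial\Omega_i}|L|^{n-1}\,d\sigma_g\Bigr)^{1/2} \longrightarrow 0.
\end{equation*}
Combining, $\int_{\Omega_i}\det(d\vec n)\,dv_g = \deg(\tilde{\vec n}_i)\cdot|\mathbb{S}^n| + o(1)$; since the left side converges to a finite limit and the right side is an integer multiple of $|\mathbb{S}^n|$ up to $o(1)$, the integers $\deg(\tilde{\vec n}_i)$ eventually stabilize to some $m \in \mathbb{Z}$, so $\int_{M^n}\det(d\vec n) = m\cdot|\mathbb{S}^n|$.

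The main obstacle is the scale-balancing in the capping step: the omitted point $p_i$ must lie far enough from $\vec n(\partial\Omega_i)$ that the primitive $\alpha_i$ remains uniformly bounded on the image of the boundary, yet close enough that the $\epsilon_i$-tubular neighborhood of $\vec n(\partial\Omega_i)$ does not exhaust $\mathbb{S}^n$. The $L^{n-1}$-smallness of $|L|$ on the boundaries supplied by Lemma~\ref{sequence of balls} is precisely what makes both conditions compatible; once the cap correction vanishes, the integer conclusion is a formal consequence of the degree formula.
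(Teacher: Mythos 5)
Your overall strategy is sound and is a genuinely different implementation of the key step from the paper's. Both arguments exhaust $M$ by the Euclidean balls supplied by Lemma~\ref{sequence of balls} and both rest on the fact that $\vec n(\partial\Omega_i)$ has $(n-1)$-area controlled by $\int_{\partial\Omega_i}|L|^{n-1}d\sigma_g\to 0$. The paper then fills the image of the boundary \emph{in the target}: by the isoperimetric inequality on $\mathbb{S}^n$ it produces chains $D_i^j$ with $\mathrm{Vol}(D_i^j)=O((\epsilon_i^j)^n)$ bounded by $\vec n(\partial B^0(0,r_i^j))$, observes that $\vec n_*[\Omega_i]-\sum_j[D_i^j]$ is a cycle (hence pairs with $\omega$ to give $m|\mathbb{S}^n|$), and discards the $O((\epsilon_i^j)^n)$ correction. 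You instead cap \emph{the domain}, extend the Gauss map over the caps into $\mathbb{S}^n\setminus\{p_i\}$, and invoke the degree formula, paying for the caps with a primitive of $\omega$ and Stokes. Your route avoids the isoperimetric inequality for (a priori non-embedded) images, at the price of having to choose the puncture $p_i$ quantitatively well.

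That choice is where you have a genuine gap. You claim the $\epsilon_i$-neighborhood of $\vec n(\partial\Omega_i)$ has volume of order $\epsilon_i\int_{\partial\Omega_i}|L|^{n-1}d\sigma_g$, and conclude that some $p_i$ lies at distance $\geq\epsilon_i$ from the image. But smallness of $\mathcal{H}^{n-1}(S)$ does not control the volume of the $\epsilon$-neighborhood of $S$ at a \emph{fixed} scale $\epsilon$ (Minkowski content only matches Hausdorff measure in the limit $\epsilon\to 0$): for $n\geq 3$ the restriction of $\vec n$ to $\Sigma_i^{(j)}$ could degenerate to rank $\leq n-2$ on most of the sphere, producing an image with $\mathcal{H}^{n-1}$ nearly zero that is nonetheless $\epsilon_i$-dense in $\mathbb{S}^n$, in which case no admissible $p_i$ exists and your bound $|\vec n^*\alpha_i|\leq C\epsilon_i^{-(n-1)}|L|^{n-1}$ collapses. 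The fix is to choose $p_i$ by averaging the quantity you actually need: since $\int_{\mathbb{S}^n}d(y,p)^{-(n-1)}\,dp\leq C$ uniformly in $y$, Fubini gives
\begin{equation*}
\int_{\mathbb{S}^n}\Bigl(\int_{\partial\Omega_i}\frac{|L|^{n-1}}{d(\vec n(x),p)^{\,n-1}}\,d\sigma_g(x)\Bigr)dp\;\leq\;C\int_{\partial\Omega_i}|L|^{n-1}\,d\sigma_g,
\end{equation*}
so some $p_i$ makes the inner integral at most $C|\mathbb{S}^n|^{-1}\int_{\partial\Omega_i}|L|^{n-1}d\sigma_g\to 0$; with that $p_i$ your Stokes estimate for the cap contribution goes through verbatim and the rest of the argument (degree formula, stabilization of $\deg(\tilde{\vec n}_i)$) is correct.
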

%\query{should we add ``where $|\mathbb S^n|$ is the volume of the unit sphere?"} 
\begin{proof} Fix an integer $j $. By Lemma \ref{sequence of balls}, on the $j$-th end $E_j=\mathbb{R}^n\setminus B$, there exists a sequence of Euclidean balls
$B^0(0, r_i^j)$, $r_i^j\rightarrow \infty$ as $i\rightarrow \infty$, such that

$$\int_{\partial B^0(0,r_i^j)}|L|^{n-1} d\sigma_g \rightarrow0.$$
For this fixed $j$, the image of $\partial B^0(0,r_i^j)$ under the Gauss map is a set of closed $(n-1)-$dimensional submanifolds. Notice that
the second fundamental form $L$ can be regarded as the differntial of the Gauss map. Thus by change of
variable under the Gauss map,
\begin{equation}
Area_{\mathbb{S}^{n}}\left(\vec n(\partial B^0(0,r_i^j))\right)\leq (\epsilon_i^j)^{(n-1)}
\end{equation}
where for each fixed $j$, $\eps_i^j\to 0$ as $i\to\infty$ is a sequence of positive numbers.
Here the area is measured by the standard metric of $\mathbb{S}^{n}$.
By the isoperimetric inequality of $\mathbb{S}^{n}$, there exist disks $D_i^j$, enclosed by the image of $\partial B^0(0,r_i^j)$ under the Gauss
map, such that
\begin{equation}
Vol_{\mathbb{S}^{n}}(D_i^j) \leq O((\epsilon_i^ j)^n).
\end{equation}

%\query{Is $(\epsilon_i^ j)^n$ should be $(\epsilon_i^ j)^{\frac{n}{n-1}}$}
%\query{add ``for a sequence of positive numbers $\eps_i^j\to 0$ as $i\to\infty$" for each $j$}
We recall that
\begin{equation*}
M =N^n \bigcup \left\{ \bigcup_{j=1}^N E_j \right\},
\end{equation*}
where $(N^n, g)$ is a compact manifold with boundary,
and each $E_j$ is a conformally flat simple end of $M^n$; that is
\begin{equation*}
(E_j, g)=  (\mathbb R^n \setminus B, e^{2w} |dx |^ 2).
\end{equation*}
Without loss of generality, we assume that $r_i^j$ is big enough such that $E_j\bigcap B^0(0,r_i^ j)= B^0(0,r_i^ j).$ Then
\begin{equation}\begin{split}
&\dstyle \int_{M}	 \det (d \vec n) dv_g\\
=&\lim_{i\rightarrow\infty} \dstyle \int_{N^n \bigcup \{ \bigcup_j E_j\bigcap B^0(0,r_i^ j)\}} \det (d \vec n) dv_g\\
=&\lim_{i\rightarrow\infty} \int_{N^n \bigcup \{ \bigcup_j B^0(0,r_i^ j)\}}  \vec n^{\sharp}(\omega) =\lim_{i\rightarrow\infty} \int_{\vec n( N^n \bigcup \{ \bigcup_j B^0(0,r_i^ j)\})} \omega\\
=& \lim_{i\rightarrow\infty} \int_{\vec n( N^n \bigcup \{ \bigcup_j B^0(0,r_i^ j)\})\setminus \bigcup_j D_i^j}\omega+\int_{\bigcup_j D_i^j}\omega.\\
\end{split}\end{equation}
Here we denote the standard volume form on $\mathbb{S}^{n}$ by $\omega$. \\
Since $$\vec n( N^n \bigcup \{ \bigcup_j B^0(0,r_i^ j)\}) \quad \mbox{and} \quad \bigcup_i D_i^ j$$ have the same boundary, $\vec n( N^n \bigcup \{ \bigcup_j B^0(0,r_i^ j)\})\setminus \bigcup_j D_i^j$ is an integral cycle. Thus for
some integer $m$,
\begin{equation}
\dstyle \int_{\vec n( N^n \bigcup \{ \bigcup_j B^0(0,r_i^ j)\})\setminus \bigcup_j D_i^j} \omega=m \int_{\mathbb{S}^n} \omega =|\mathbb S^n| m.
\end{equation}
Thus
\begin{equation}
\dstyle \int_{M^n}  \det (d \vec n) dv_g -|\mathbb S^n| m= \int_{\bigcup_j D_i^ j} \omega =O((\epsilon_i^j)^n)\rightarrow 0.
\end{equation}
%\query{Is $(\epsilon_i^ j)^n$ should be $(\epsilon_i^ j)^{\frac{n}{n-1}}$}
This finishes the proof that
\begin{equation}
\dstyle \int_{M^n}  \det (d \vec n) dv_g= |\mathbb S^n| m.
\end{equation}

\end{proof}
\begin{proof}[Proof of Theorem \ref{main theorem}]
By computing $A_g=\frac{1}{2}(Ric-\frac{R}{6}g) $
using the Gauss equation, we observe that
$$\det (d \vec n) = \frac23\sigma_2 (A_g) $$
on four dimensional immersed manifold $M\hookrightarrow \mathbb{R}^5$.

Now we recall the definition of $Q$-curvature on four manifold
\begin{equation}\begin{split}
Q=&\frac{1}{12}(-\Delta_g R_g+\frac{1}{4} R^2 -3|E|^2)\\
=&-\frac{1}{12}\Delta_g R_g+ 2\sigma_2 (A_g),
\end{split}
\end{equation}
where $E$ is the traceless part of the Ricci curvature. 
Thus $Q$-curvature differs from $\sigma_2$ of Schouten tensor by a divergence term. By Lemma \ref{vanishing}, the integral of
the Laplacian of the scalar curvature $R_g$ vanishes if $Q$-curvature is totally integrable. Therefore,
\begin{equation}\begin{split}
\int_M Q_g dv_g=&
\int_M  2\sigma_2 (A_g) dv_g=3\int_M\det(d\vec n)d v_g.\\\end{split}
\end{equation}
By Lemma \ref{det}, this is equal to
$$3|\mathbb S^4| m=3\cdot \frac{8\pi^2}3 m=8\pi^2 m.$$

\end{proof}

\begin{remark}
In the above proof, we have proved that $\displaystyle \int_M \sigma_2 (A_g) dv_g$ is an integral multiple of $4\pi^2$.
\end{remark}

\hide{\bf Theorem 1.4
Let $(M^n,g)$ be an even dimensional locally conformally flat complete
manifold with finite total $Q$-curvature and finitely many conformally flat simple ends.
Suppose that on each end, the metric is normal. If $M^n$ is immersed in $\mathbb{R}^{n +1}$ with
\begin{equation}
\dstyle \int_{M^n} |L|^ndv_g< \infty,
\end{equation}
with $L$ being the second fundamental form,
then
$$\int_{M^n} Q_g dv_g=c_n\mathbb{Z},$$
where $c_n= 2 ^ {n -2} (\frac n-2 2)$. It is equal to the integral of the $Q$-curvature on the standard $n $-sphere $\mathbb{S}^n$.}

\section {Proof of Theorem \ref{mainhigh}}

The theorem is also valid for all even dimensional locally conformally flat manifolds with
simple ends,
%manifold $(\mathbb{R}^n, e^{2u}|dx|^2)$
if the metric on each end is normal. We begin by the following lemma, which seems to be well-known.

%\query{I changed all ``Pffaf", ``Phaff" to ``Pfaff"}
\begin{lemma}\label{Pfaffian high}
$$\mathrm{Pfaff(\Omega)}= (n -1)!!\det(d \vec n). $$
\end{lemma}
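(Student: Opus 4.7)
The plan is to work in a local orthonormal frame that simultaneously diagonalizes the shape operator, reducing both sides of the identity to a symmetric polynomial in the principal curvatures and then matching constants via a combinatorial identity.

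\textbf{Setup.} At a point $p\in M^n$, choose a local orthonormal frame $\{e_1,\dots,e_n\}$ of $TM$ with dual coframe $\{\theta^1,\dots,\theta^n\}$ so that the second fundamental form $L$ is diagonal with principal curvatures $k_1,\dots,k_n$. With $\vec n$ the Gauss map $M\to\mathbb S^n$, the shape operator is $-d\vec n$, so in this frame $d\vec n(e_i)=-k_i e_i$, hence
\begin{equation*}
\det(d\vec n)=(-1)^n k_1k_2\cdots k_n = k_1k_2\cdots k_n
\end{equation*}
(using $n$ even). By the Gauss equation, the Riemannian curvature 2-forms of $M^n\hookrightarrow\mathbb R^{n+1}$ are
\begin{equation*}
\Omega_{ij}=k_ik_j\,\theta^i\wedge\theta^j\qquad(i\neq j,\text{ no sum}),
\end{equation*}
which is the key point that makes the computation explicit.

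\textbf{Main computation.} Recall that for even $n$ the Pfaffian of a skew-symmetric matrix of $2$-forms is
\begin{equation*}
\mathrm{Pfaff}(\Omega)=\frac{1}{2^{n/2}(n/2)!}\sum_{\sigma\in S_n}\mathrm{sgn}(\sigma)\,\Omega_{\sigma(1)\sigma(2)}\wedge\Omega_{\sigma(3)\sigma(4)}\wedge\cdots\wedge\Omega_{\sigma(n-1)\sigma(n)}.
\end{equation*}
Plugging in the diagonal expression above, each term in the sum becomes
\begin{equation*}
\bigl(k_{\sigma(1)}\cdots k_{\sigma(n)}\bigr)\,\theta^{\sigma(1)}\wedge\theta^{\sigma(2)}\wedge\cdots\wedge\theta^{\sigma(n)}=k_1\cdots k_n\cdot\mathrm{sgn}(\sigma)\,dv_g,
\end{equation*}
since the product of the $k$'s is permutation-invariant and the reordering of the $\theta$'s contributes exactly $\mathrm{sgn}(\sigma)$. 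The two sign factors combine to $\mathrm{sgn}(\sigma)^2=1$, so every permutation contributes the same value and the sum collapses to
\begin{equation*}
\mathrm{Pfaff}(\Omega)=\frac{n!}{2^{n/2}(n/2)!}\,k_1\cdots k_n\,dv_g=\frac{n!}{2^{n/2}(n/2)!}\,\det(d\vec n)\,dv_g.
\end{equation*}

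\textbf{Identifying the constant.} It remains to observe the elementary identity $n!=2^{n/2}(n/2)!\cdot(n-1)!!$ valid for even $n$, which follows by splitting $n!=n!!\cdot(n-1)!!$ and noting $n!!=n(n-2)\cdots 2=2^{n/2}(n/2)!$. Substituting, the coefficient is exactly $(n-1)!!$, which yields the claimed formula $\mathrm{Pfaff}(\Omega)=(n-1)!!\det(d\vec n)$. There is essentially no obstacle here: the only points to be careful about are the $(-1)^n=1$ in the Gauss map computation and getting the Pfaffian normalization right, both of which are routine.
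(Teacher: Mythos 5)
Your proof is correct and follows essentially the same route as the paper: a direct computation of the Pfaffian from the Gauss equation, ending with the same combinatorial identity $n!=2^{n/2}(n/2)!\,(n-1)!!$. The only cosmetic difference is that you diagonalize the shape operator and use the permutation-sum formula for the Pfaffian, whereas the paper keeps general indices and contracts against the generalized Kronecker delta $\delta^{i_1\cdots i_n}_{j_1\cdots j_n}$; the two are the same calculation in different notation (and note the paper treats $\mathrm{Pfaff}(\Omega)$ as the scalar density, i.e.\ your $n$-form divided by $dv_g$).
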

\begin{proof}
For higher dimension, the relation between $\det(d \vec n)$ and the Pfaffian of a Riemannian curvature $\mathrm{Pffaf(\Omega)}$ is given by the following formula:
suppose $\{e_i, 1\leq i\leq n\}$ is a locally orthogonal frame whose coframe field is $\{\theta_i, 1\leq i\leq n\}$, the curvature form $\Omega_{ij}=\frac12 R_{ijkl} \theta^k\wedge \theta^l$.
Consider the differential $n$-form
\begin{equation}\Omega=(-1)^{\frac n2}\frac{1}{2^n\pi^{\frac n2}(\frac n2)!}\delta_{1\cdots n}^{i_1\cdots i_n}\Omega_{i_1i_2}\wedge\cdots\wedge\Omega_{i_{n-1}i_n}.\end{equation}
$\Omega$ can be denoted by
\begin{equation}\Omega=K d\sigma,\end{equation} where $d\sigma=\theta^1\wedge\cdots\wedge \theta^n$.
Here
\begin{equation}K=\frac 1{2^n(2\pi)^{\frac n2}(\frac n2)!}\delta_{j_1\cdots j_n}^{i_1\cdots i_n}R_{i_1i_2j_1j_2}\cdots R_{i_{n-1}i_nj_{n-1}j_n}.\end{equation}
We note that
\begin{equation}\label{Pfaffian}
\begin{aligned}
\mathrm{Pfaff}(\Omega)=&(2\pi)^{\frac n2}K\\
=&(2\pi)^{\frac n2}\frac 1{2^n(2\pi)^{\frac n2}(\frac n2)!}\delta_{j_1\cdots
j_n}^{i_1\cdots i_n}R_{i_1i_2j_1j_2}\cdots R_{i_{n-1}i_nj_{n-1}j_n}.
\end{aligned}\end{equation}
By the Gauss equation $R_{ijkl}=L_{ik}L_{jl}-L_{il}L_{jk}$, and the fact that $\delta_{j_1j_2\cdots j_n}^{i_1i_2\cdots i_n}=-\delta_{j_2j_1\cdots j_n}^{i_1i_2\cdots i_n}$, we obtain
\begin{equation}
\begin{aligned}
\mathrm{Pfaff}(\Omega)=&\frac1{2^n(\frac n2)!}\delta_{j_1\cdots j_n}^{i_1\cdots i_n}(L_{i_1j_1}L_{i_2j_2}-L_{i_1j_2}L_{i_2j_1})\\
&\hspace{30mm}\cdots(L_{i_{n-1}j_{n-1}}L_{i_nj_n}-L_{i_{n-1}j_n}L_{i_nj_{n-1}})\\
=&\frac1{2^n(\frac n2)!}\delta_{j_1\cdots j_n}^{i_1\cdots i_n}(2L_{i_1j_1}L_{i_2j_2})\cdots(2L_{i_{n-1}j_{n-1}}L_{i_nj_n})\\
=&\frac{n!}{2^{\frac n2}n!(\frac n2)!}\delta_{j_1\cdots j_n}^{i_1\cdots i_n}L_{i_1j_1}L_{i_2j_2}\cdots L_{i_{n-1}j_{n-1}}L_{i_nj_n}\\
=&\frac{n!}{2^{\frac n2}(\frac n2)!}\det(L)\\
=&(n-1)!!\det(L).
\end{aligned}
\end{equation}
\end {proof}

The integration of $\mathrm{Pfaff}(\Omega)$ or $K$ appears in the Gauss-Bonnet-Chern theorem:
\begin{equation}\frac1{(2\pi)^{\frac n2}}\int_M\mathrm{Pfaff}(\Omega)=\int_MK=\chi(M).\end{equation}
When $n=4$,
\begin{equation}\mathrm{Pfaff}(\Omega)=2\sigma_2(A_g)=Q_g+\frac 1{12}\Delta_gR_g.\end{equation}

Next we prove an analogous result of Lemma \ref{vanishing}. However, due to the complexity of higher dimensions, the proof is more complicated, and  the conformally flat structure is used in an essential way.
%\query{maybe it is better to define vector field of weight -n+1 before the lemma}
\begin{lemma}\label{vanishing2} Suppose $M^n$ satisfies the assumptions in Theorem \ref{mainhigh}. $T^i$ is an intrinsic vector field of weight\footnote{See also~\cite{lu}*{Page 245}.} $(-n+1)$ on $M^n$. Then
\begin{equation*}\int_ M div_i T^i(g)dv_g = 0. \end{equation*}
\end{lemma}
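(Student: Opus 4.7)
The plan is to adapt the proof of Lemma~\ref{vanishing} to arbitrary even dimension and to an arbitrary intrinsic divergence of weight $(-n+1)$, with the weight condition on $T^i$ playing the role that the specific form $\nabla^i R_g$ played there. The overall architecture is the same: on each conformally flat simple end, introduce a Euclidean cutoff (geodesic cutoffs are unavailable because we have no Ricci curvature lower bound), integrate by parts, and show that the resulting annular integrand vanishes in the limit via the $L^n$-bound on $L$ together with derivative estimates on $w$ coming from the normal metric representation.

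First I would, on each end $E_j=(\mathbb{R}^n\setminus B,e^{2w}|dx|^2)$, introduce a cutoff $\eta_\rho$ equal to $1$ on $N \cup (E_j\cap B^0(0,\rho))$, supported in $E_j\cap B^0(0,2\rho)$, with $|\partial^k\eta_\rho|=O(\rho^{-k})$ on the Euclidean annulus $A_\rho^j:=(B^0(0,2\rho)\setminus B^0(0,\rho))\cap E_j$. Writing the metric divergence in Euclidean coordinates, $\mathrm{div}_i T^i\,dv_g=\partial_j(e^{nw}\widetilde T^j)\,dx$, where $\widetilde T^j$ denote the Euclidean-frame components of $T$, the integration by parts reduces matters to showing
\begin{equation*}
\lim_{\rho\to\infty}\int_{A_\rho^j} e^{nw}\,\widetilde T^j\,\partial_j\eta_\rho\,dx=0
\end{equation*}
on each end, since the contribution on the compact core $N$ drops out because $\eta_\rho\equiv 1$ there.

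Next, I would exploit the weight hypothesis. Since $T^i$ is intrinsic of weight $(-n+1)$, in the conformal frame the combination $e^{nw}\widetilde T^j$ decomposes into a finite sum of monomials built from derivatives of $w$ and from Riemann curvature components $R$ of $g$, with total order of derivatives of $w$ plus twice the number of curvature factors equal to $n-1$. By the Gauss equation $|R|\le C|L|^2$, each monomial is bounded on $A_\rho^j$ by H\"older's inequality as a product of (i) an $L^n$-norm of $|L|$ on the annulus, which tends to $0$ by absolute continuity since $|L|^n\in L^1(M)$; (ii) $L^p$-norms of $|\partial^\alpha w|$ for appropriate multi-indices $\alpha$ and exponents $p$; and (iii) the factor $|\partial_j\eta_\rho|=O(\rho^{-1})$. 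The weight condition $(-n+1)$ is exactly what guarantees that the resulting powers of $\rho$ cancel.

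The $L^p$-bounds on derivatives of $w$ come from the normal metric representation $w(x)=\frac{1}{c_n}\int\log\frac{|y|}{|x-y|}P(y)\,dy+C$: differentiating under the integral yields $|\partial^\alpha w(x)|\le C\int|x-y|^{-|\alpha|}|P(y)|\,dy$ for $|\alpha|\ge 1$, and integrating over $A_\rho^j$ in spherical coordinates using Fubini, as in the bound $\int_{B^0(0,2\rho)\setminus B^0(0,\rho)}|\partial_i w|^2\,dx=O(\rho^2)$ obtained in the proof of Lemma~\ref{vanishing}, produces uniform annular bounds of the required type. The main obstacle is the combinatorial bookkeeping in the previous paragraph: for general $n$ there are many monomial types of total weight $n-1$, and each one needs its own H\"older decomposition and matching integral estimate on $w$. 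Once this is organized systematically in a way that uses only the weight hypothesis, the argument closes as in the four-dimensional case and the divergence integral vanishes.
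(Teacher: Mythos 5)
Your overall architecture matches the paper's: Euclidean cutoffs on each conformally flat simple end, integration by parts, H\"older's inequality to split off a decaying curvature norm controlled by $\int |L|^n$, and annular estimates on $\partial^\alpha w$ from the normal-metric representation. But there is a genuine gap at the point where you invoke the Gauss equation. After Weyl's decomposition, a monomial of the form $pcontr_i(\nabla^{(m_1)}Rm\otimes\cdots\otimes\nabla^{(m_a)}Rm)$ with $\sum_t(m_t+2)=n-1$ need not contain any \emph{undifferentiated} curvature factor (e.g.\ a single factor $\nabla^{(n-3)}Rm$ when $a=1$), and the pointwise bound $|Rm|\le C|L|^2$ does not apply to $\nabla^{(m)}Rm$ with $m\ge 1$, since nothing in the hypotheses controls derivatives of $L$. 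So your claimed decomposition of $e^{nw}\widetilde T^j$ into products of $w$-derivatives with undifferentiated curvature components is not automatic. The paper secures it by an \emph{additional} integration by parts (equation \eqref{eqn:4.10}), moving all of $\nabla^{(m_1)}$ off the first curvature factor onto the remaining factors and onto $(\eta_\rho)_i$; only then does H\"older with exponents $\tfrac n2$ and $\tfrac{n}{n-2}$ extract the factor $\bigl(\int_{E_j\cap(B^0(2\rho)\setminus B^0(\rho))}|Rm|^{n/2}dv_g\bigr)^{2/n}\le C\bigl(\int|L|^n dv_g\bigr)^{2/n}\to 0$. This step is essential, not cosmetic: the remaining $w$-derivative integral is only ever shown to be \emph{bounded} (the powers of $\rho$ cancel exactly), so all of the decay in the argument comes from this single extracted curvature factor. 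Without arranging for its presence in every monomial, your estimate yields boundedness of the annular contribution rather than vanishing.

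Secondarily, what you defer as ``combinatorial bookkeeping'' is the technical heart of the paper's proof: a Claim that for multi-indices with $\sum_k|\alpha^k|=n-2-a$ and $a\ge 1$ one can choose H\"older exponents $q_k>1$ with $\sum_k 1/q_k=1$ and $|\alpha^k|\,q_k<(n-1)\cdot\tfrac{n-2}{n}$. The last inequality is exactly what makes $\int_{\partial B^0(0,r)}|x-y|^{-|\alpha^k|q_k\cdot\frac{n}{n-2}}\,d\sigma_0(x)$ admit a bound of size $|\partial B^0(0,r)|\cdot O\bigl(r^{-|\alpha^k|q_k\cdot\frac{n}{n-2}}\bigr)$ uniformly in $y$ (the exponent must stay below $n-1$), which is what your Fubini-in-spherical-coordinates step requires; and its proof uses $n-2-a\le n-3<(n-1)\cdot\tfrac{n-2}{n}$, i.e.\ it genuinely uses $a\ge 1$, at least one derivative landing on the cutoff. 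So the exponent selection depends on the structure of the integrand and is a lemma to be proved, not routine organization.
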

\begin{proof}By a classical result which is essentially due to
Weyl \cite{Weyl},
an intrinsic vector field $T^i(g)$ is a linear combination
$$T^i(g)= \sum_{q\in Q} a_q C^{q,i}(g),$$
which each $C^{q,i}(g)$ is a partial contraction with one free index $i$ that takes the form
$$C^{q,i}(g)=pcontr_i(\nabla^{(m_1)}_{r_1\dots r_{m_1}}R_{i_1j_1k_1l_1} \otimes \cdots \otimes \nabla^{(m_a)}_{t_1\dots t_{m_a}}R_{i_aj_ak_al_a})$$
with $\displaystyle \sum_{t=1}^a (m_t+2)=n-1$.
We simplify notations to write it in the following form:
$$C^{q,i}(g)=pcontr_i(\nabla^{(m_1)}Rm \otimes \cdots \otimes \nabla^{(m_a)}Rm).$$
The main idea of the following proof is to factor out a curvature term $\displaystyle \int |Rm|^{n/2}dv_g$, which can be controlled by $C \displaystyle \int |L|^n dv_g $ (see \eqref{eqn:Ln}), and then to estimate the curvature derivative terms in terms of the conformal factor $w$. We begin with the following formula of integration by parts. 

\begin{equation}\label{eqn:4.0}
\begin{split}
&\int_{M} div_i C^{q,i}(g) dv_g\\
=&-\lim_{\rho\rightarrow \infty}  \sum_{j=1}^{N} \int_{E_j \cap (B^0(2\rho)\setminus B^0(\rho)) } C^{q,i}(g) (\eta_\rho)_i dv_g\\
=&-\lim_{\rho\rightarrow \infty}  \sum_{j=1}^{N} \int_{E_j \cap (B^0(2\rho)\setminus B^0(\rho)) }
pcontr_i(\nabla^{(m_1)}Rm \otimes \\
&\hspace{50mm}\cdots \otimes \nabla^{(m_a)}Rm)(\eta_\rho)_i dv_g,\\
\end{split}
\end{equation}
where $(\eta_\rho)_i$, which is defined in the proof of Lemma~\ref{vanishing}, is  supported on $B^0(2\rho)\setminus B^0(\rho)$.
On each end, we consider the term
\begin{equation}
\label{eqn:4.9}\int_{E_j \cap (B^0(2\rho)\setminus B^0(\rho)) }
pcontr_i(\nabla^{(m_1)}Rm \otimes \cdots \otimes \nabla^{(m_a)}Rm)(\eta_\rho)_i dv_g.\end{equation}
Using integration by parts, we obtain a term $R_{ijkl}$ without any derivative in the contraction. If there is already such a term in the contraction, then we skip this step.
Then \eqref{eqn:4.9} is equal to
\begin{equation}\begin{split}
\label{eqn:4.10}
&\int_{E_j \cap (B^0(2\rho)\setminus B^0(\rho)) }
(-1)^{m_1}R_{i_1j_1k_1l_1}\otimes \\
&  pcontr_{i_1j_1k_1l_1}( \nabla^{(m_1)}[(\nabla^{(m_2)}Rm\otimes \cdots \otimes \nabla^{(m_a)}Rm)(\eta_\rho)_i]) dv_g.\\
\end{split}\end{equation}
We use $pcontr_{i_1j_1k_1l_1}$ to denote the partial contraction with four free indices $i_1, j_1, k_1, l_1$.

By the conformal change $g=e^{2w}|dx|^2$ and the H\"{o}lder's inequality, \eqref{eqn:4.10} is bounded
by
\begin{equation}\begin{split}
\label{eqn:4.11}
&|\int_{E_j \cap (B^0(2\rho)\setminus B^0(\rho)) }
(-1)^{m_1}R_{i_1j_1k_1l_1}e^{2w}\\
&  pcontr_{i_1j_1k_1l_1}\left( e^{-2w}\nabla^{(m_1)}[(\nabla^{(m_2)}Rm\otimes \cdots \otimes \nabla^{(m_a)}Rm)(\eta_\rho)_i]\right) e^{nw}dx|\\
\leq &\left(\int_{E_j \cap (B^0(2\rho)\setminus B^0(\rho)) }
|Rm|^{n/2} e^{nw}dx\right)^{2/n} \\
& \cdot \left(\int_{E_j \cap (B^0(2\rho)\setminus B^0(\rho)) }  \Big|e^{-2w} pcontr_{i_1j_1k_1l_1}\left(\nabla^{(m_1)}[(\nabla^{(m_2)}
Rm\otimes \right. \right. \\
&\hspace{30mm}\left.\left. \cdots \otimes \nabla^{(m_a)}Rm)(\eta_\rho)_i]\right)e^{nw}\Big|^{\frac{n}{n-2}} dx\right)^{\frac{n-2}{n}}.\\
\end{split}\end{equation}

\noindent Since
\begin{equation}\label{eqn:Ln}\begin{split}
&\int_{E_j \cap (B^0(2\rho)\setminus B^0(\rho)) }
|Rm|^{n/2} e^{nw}dx\\
=&\int_{E_j \cap (B^0(2\rho)\setminus B^0(\rho)) }
|Rm|^{n/2} dv_g\\
\leq& C\int_{E_j \cap (B^0(2\rho)\setminus B^0(\rho)) }
|L|^{n} dv_g\rightarrow 0,  \\
\end{split}\end{equation}
as $\rho\rightarrow \infty$, we reduce the problem to show that
\begin{equation}\begin{split}\label{eqn:4.12}
&\int_{E_j \cap (B^0(2\rho)\setminus B^0(\rho)) } 
 \Big| pcontr_{i_1j_1k_1l_1}\left(\nabla^{(m_1)}[(\nabla^{(m_2)}
Rm\otimes \right.\\
&\hspace{20mm}\left.\cdots \otimes \nabla^{(m_a)}Rm)(\eta_\rho)_i]\right)\Big|^{\frac{n}{n-2}}e^{nw} dx
\leq C.\\
\end{split}
\end{equation}

\noindent We use $g=e^{2w}|dx|^2$ to write the integrand
\begin{equation}\label{eqn:4.13}\Big|pcontr_{i_1j_1k_1l_1}\left( \nabla^{(m_1)}[(\nabla^{(m_2)}
Rm\otimes
\cdots \otimes \nabla^{(m_a)}Rm)(\eta_\rho)_i]\right)\Big|^{\frac{n}{n-2}}e^{nw}\end{equation}
in coordinate derivatives of $w$.

Note that under the flat  coordinate system, we have
\begin{equation}\begin{split}
\Gamma_{jk}^s=& \frac{1}{2} g^{sl}(\frac{\partial g_{lj}}{\partial x_k}+\frac{\partial g_{lk}}{\partial x_j}-\frac{\partial g_{jk}}{\partial x_l})  \\
=&\frac{\partial w}{\partial x_{k}}\delta_{sj}+\frac{\partial w}{\partial x_{j}}\delta_{sk}-\frac{\partial w}{\partial x_{s}}\delta_{jk},\\
\end{split}\end{equation}
and we have
\begin{equation}\begin{split}
&R_{ivku}g^{ul}g^{jv}=\Gamma_{ik}^p\Gamma_{vp}^lg^{jv} + \frac{\partial \Gamma_{ik}^l}{\partial x_v}       g^{jv}
-\Gamma_{vk}^p\Gamma_{ip}^{l}g^{jv}-\frac{\partial \Gamma_{vk}^l}{\partial x_i}g^{jv} \\
=&(\frac{\partial w}{\partial x_{k}}\delta_{ip}+\frac{\partial w}{\partial x_{i}}\delta_{kp}-\frac{\partial w}{\partial x_{p}}\delta_{ik})(\frac{\partial w}{\partial x_{j}}\delta_{pl}+\frac{\partial w}{\partial x_{p}}\delta_{lj}-\frac{\partial w}{\partial x_{l}}\delta_{jp})e^{-2w}\\
-&(\frac{\partial w}{\partial x_{k}}\delta_{jp}+\frac{\partial w}{\partial x_{j}}\delta_{kp}-\frac{\partial w}{\partial x_{p}}\delta_{jk})(\frac{\partial w}{\partial x_{i}}\delta_{pl}+\frac{\partial w}{\partial x_{p}}\delta_{li}-\frac{\partial w}{\partial x_{l}}\delta_{ip})e^{-2w}\\
+&(\frac{\partial^2 w}{\partial x_j \partial x_k}\delta_{il}+ \frac{\partial^2 w}{\partial x_i \partial x_j}\delta_{kl}-\frac{\partial^2 w}{\partial x_j \partial x_l}\delta_{ik}  )e^{-2w}\\
-&(\frac{\partial^2 w}{\partial x_i \partial x_k}\delta_{jl}+ \frac{\partial^2 w}{\partial x_i \partial x_j}\delta_{kl}-\frac{\partial^2 w}{\partial x_i \partial x_l}\delta_{jk}  )e^{-2w}\\
=&e^{-2w}\sum_{\alpha} b_{\alpha}\cdot \partial^{\alpha^1}w \cdots \partial^{\alpha^p}w,\\
\end{split}\end{equation}
where $$ \displaystyle \sum_{k=1}^{p}|\alpha^k|= 2.$$
Similarly,
\begin{equation}\begin{split}
&\nabla^{(m)}_{r_1\dots r_m} R_{ivku}g^{ul}g^{jv}\\
=&e^{-2w} \sum_{\alpha} b_{\alpha}\cdot \partial^{\alpha^1}w \cdots \partial^{\alpha^p}w.\\
\end{split}\end{equation}
where $$ \displaystyle \sum_{k=1}^{p}|\alpha^k|= m+2.$$

Notice that in the integrand of \eqref{eqn:4.12},
some of the derivatives in $\nabla^{(m_1)}$ fall on
$\nabla^{(m_2)}Rm\otimes \cdots \otimes \nabla^{(m_a)}Rm$, and others fall on $(\eta_\rho)_i$.
Denote the number of derivatives fall on $(\eta_\rho)_i$ by $n_1\geq0$.
By the definition of $\eta_\rho$, $\partial^{n_1+1}(\eta_\rho)\leq O(\frac{1}{\rho^{n_1+1}})$ on
$B^0(2\rho)\setminus B^0(\rho)$.

It is not hard to see that the integrand of \eqref{eqn:4.13} is bounded by
a finite linear combination of partial contractions of coordinate derivatives of $w$:
$$\sum_{n_1=0}^{m_1}\sum_{\alpha} b_{\alpha, n_1}\cdot |\partial^{\alpha^1}w \cdots \partial^{\alpha^p}w|^{\frac{n}{n-2}}
\cdot O( \frac{1}{\rho^{\frac{(n_1+1)n}{n-2}}}),$$
where the multi-index derivative is with respect to the Euclidean metric, defined by
$$\partial^{\alpha^k}w
=\frac{\partial^{\alpha^k_1}}{\partial x_1^{\alpha^k_1}}\cdots \frac{\partial^{\alpha^k_n}}{\partial x_n^{\alpha^k_n}}w.
$$

The indices $\alpha^1,...,\alpha^p$ satisfy
$$ \sum_{k=1}^{p}|\alpha^k|= n-n_1-3,$$
where the norm of a multi-index $\alpha^k$ is defined by $|\alpha^k| := \displaystyle \sum_{j=1}^n \alpha^k_j$.
We note that any powers of $e^{w}$ are cancelled in the process of the partial contraction.
And $$(n_1+1)\cdot\frac{n}{n-2}\geq\frac{n}{n-2}.$$

%By previous arguments, we deduce that
%\begin{equation}\label{eqn:4.1}
%\begin{split}
%&\int_{M^n} div_i C^{q,i}(g) dv_g\\
%=&\int_{N^n} div_i  C^{q,i}(g) dv_g+ \lim_{R\rightarrow \infty} \sum_{j=1}^{N}
%\int_{E_j \cap B^0(2R)} div_i C^{q,i}(g) \eta_R dv_g\\
%=&\lim_{R\rightarrow \infty}  \sum_{j=1}^{N} \int_{E_j \cap (B^0(2R)\setminus B^0(R)) } C^{q,i}(g)(\eta_R)_i dx\\
%=&\lim_{R\rightarrow \infty}  \sum_{j=1}^{N} \int_{E_j \cap (B^0(2R)\setminus B^0(R)) }
%\sum_\alpha b_\alpha \cdot pcontr_i(\partial^{\alpha^1}w \cdots \partial^{\alpha^p}w) (\eta_R)_i dx.\\
%\end{split}
%\end{equation}
%Here we use the fact that $div_i C^{q,i}(g) $ is integrable to derive the first equalilty, and the fact %that $(\eta_R)_i$ is supported on $B^0(2R)\setminus B^0(R)$ in the second equality. Notice that $%(\eta_R)_i:=\partial_{x_i} \eta_R$ is the coordinate derivative.

To prove the estimate \eqref{eqn:4.12}, 
we will show that for each $j=1,...,N$, $n_1=0,\cdots,m_1$, and multi-index $\alpha$, if
$$ \sum_{k=1}^{p}|\alpha^k|= n-2-a, \quad a=n_1+1\geq 1$$
then
\begin{equation}\label{eqn:4.1}\lim_{\rho\rightarrow \infty}  \int_{E_j \cap (B^0(2\rho)\setminus B^0(\rho))}
|\partial^{\alpha^1}w \cdots \partial^{\alpha^p}w|^{\frac{n}{n-2}} \cdot O(\frac{1}{\rho^{a\cdot \frac{n}{n-2}}}) dx\leq C.
\end{equation}

%\begin{equation}\label{eqn:4.2}
%\begin{split}
%&|\int_{E_j \cap (B^0(2R)\setminus B^0(R)) }
%pcontr_i(\partial^{\alpha^1}w \cdots \partial^{\alpha^p}w) (\eta_R)_i dx|\\
%\leq &\frac{C}{R}
%\int_{E_j \cap (B^0(2R)\setminus B^0(R)) }
%|pcontr_i(\partial^{\alpha^1}w \cdots \partial^{\alpha^p}w ) | dx.\\
%\end{split}
%\end{equation}
In order to prove \eqref{eqn:4.1}, we \underline{\bf{claim}} that
for $\alpha^1$,..., $\alpha^p$ satisfying $$\sum_{k=1}^p|\alpha^k| =n-2-a, \quad a\geq 1$$
there exists $q_1, ..., q_p$, such that
\begin{enumerate}
\item $q_1>1, ..., q_p>1$, for $k=1,...,p$;
\item $\displaystyle \frac{1}{q_1}+\cdots +\frac{1}{q_p}=1$;
\item $|\alpha^k|\cdot q_k<(n-1) \cdot \frac{n-2}{n} $, for $k=1,...,p$.
\end{enumerate}

Let us first apply the Claim to prove Lemma \ref{vanishing2}. Using the above Claim, we have
\begin{equation}\label{eqn:4.5}
\begin{split}
&\frac{1}{\rho^{a\cdot \frac{n}{n-2}}}\int_{E_j \cap (B^0(2\rho)\setminus B^0(\rho)) }
|\partial^{\alpha^1}w \cdots \partial^{\alpha^p}w|^{\frac{n}{n-2}}  dx\\
 &\leq \frac{1}{\rho^{a\cdot \frac{n}{n-2}}}
(\int_{E_j \cap (B^0(2\rho)\setminus B^0(\rho)) }
|\partial^{\alpha^1}w |^{q_1\cdot \frac{n}{n-2}}dx)^{\frac{1}{q_1}} \\
& \hspace{10mm}\cdots
(\int_{E_j \cap (B^0(2\rho)\setminus B^0(\rho)) }
|\partial^{\alpha^p}w |^{q_p\cdot \frac{n}{n-2}}dx)^{\frac{1}{q_p}},\\
\end{split}
\end{equation}
where $a=n_1+1\geq 1$.
By definition of normal metric,
\begin{equation}
w(x)=\frac{1}{c_n}\int_{\mathbb{R}^n\setminus B}\log\frac{|y|}{|x-y|}Q_g(y)e^{nw(y)} dy + C.
\end{equation}
Hence
\begin{equation}
|\partial^{\alpha^k} w(x)|\leq \frac{1}{c_n}\int_{\mathbb{R}^n\setminus B}\frac{1}{|x-y|^{|\alpha^k|}}|Q_g(y)|e^{nw(y)} dy,
\end{equation}
where $|\alpha^k| =\sum_{j=1}^n \alpha^k_j$.
By H\"{o}lder's inequality, for $q_k>1$,
\begin{equation}\begin{split}
|\partial^{\alpha^k} w(x)|^{q_k \cdot \frac{n}{n-2}}\leq& \frac{1}{c_n^{q_k \cdot \frac{n}{n-2}}}\int_{\mathbb{R}^n\setminus B}\frac{1}{|x-y|^{|\alpha^k|\cdot q_k \cdot \frac{n}{n-2}}}|Q_g(y)|e^{nw(y)} dy \\
&\cdot (\int_{\mathbb{R}^n\setminus B}|Q_g(y)|e^{nw(y)} dy)^{q_k \cdot \frac{n}{n-2}-1}.\\
\end{split}
\end{equation}
Since $	\int_{\mathbb{R}^n\setminus B} | Q_g(y)|e^{nw(y)} dy<\infty$, this is bounded by
$$C\int_{\mathbb{R}^n\setminus B}\frac{1}{|x-y|^{|\alpha^k|\cdot q_k \cdot \frac{n}{n-2}}}|Q_g(y)|e^{nw(y)} dy.$$
Therefore,
\begin{equation}\label{eqn:4.3}
\begin{split}
&\int_{\partial B^0(0,r)} |\partial^{\alpha^k} w(x)|^{q_k \cdot \frac{n}{n-2}} d\sigma_0(x) \\
\leq& C\int_{\partial B^0(0,r)} \int_{\mathbb{R}^n\setminus B}\frac{1}{|x-y|^{|\alpha^k|\cdot q_k \cdot \frac{n}{n-2}}}|Q_g(y)|e^{nw(y)} dyd\sigma_0(x).\\
\end{split}
\end{equation}
By condition (3) in the Claim, $n-1>|\alpha^k|\cdot q_k  \cdot \frac{n}{n-2}$. Then by using the homogeneity of the integral, we have
$$\int_{\partial B^0(0,r)}\frac{1}{|x-y|^{|\alpha^k|\cdot q_k \cdot \frac{n}{n-2}}}d\sigma_0(x)\leq  |\partial
B^0(0,r)| O(\frac{1}{r^{|\alpha^k|\cdot q_k \cdot \frac{n}{n-2}}}).
$$
Plugging the above inequality into \eqref{eqn:4.3},  we obtain
\begin{equation}\label{eqn:4.4}
\begin{split}
&\int_{\partial B^0(0,r)} |\partial^{\alpha^k} w(x)|^{q_k \cdot \frac{n}{n-2}} d\sigma_0(x) \\
\leq& Cr^{n-1-|\alpha^k|\cdot q_k \cdot \frac{n}{n-2}} \int_{\mathbb{R}^n\setminus B}    |Q_g(y)|e^{nw(y)} dy\\
=&\tilde{C} r^{n-1-|\alpha^k|\cdot q_k \cdot \frac{n}{n-2}}.\\
\end{split}
\end{equation}
Hence
\begin{equation}\label{eqn:4.6}
\begin{split}
&\int_{E_j\cap (B^0(0,2\rho)\setminus B^0(0,\rho))} |\partial^{\alpha^k} w(x)|^{q_k \cdot \frac{n}{n-2}} dx\\
\leq&\int_{\rho}^{2\rho}\int_{\partial B^0(0,r)} |\partial^{\alpha^k} w(x)|^{q_k \cdot \frac{n}{n-2}} d\sigma_0(x) dr \\
\leq& C\rho^{n-|\alpha^k|\cdot q_k \cdot \frac{n}{n-2}}.\\
\end{split}
\end{equation}
Using \eqref{eqn:4.6} in \eqref{eqn:4.5}, we obtain
\begin{equation}\label{eqn:4.7}
\begin{split}
&\frac{1}{\rho^{a \cdot \frac{n}{n-2}}}\int_{E_j \cap (B^0 (2\rho)\setminus B^0 (\rho))} |\partial^{\alpha^1} w\cdots \partial^{\alpha^p }w |^{\frac{n}{n-2}} dx\\
\leq& \frac{C}{\rho^{a \cdot \frac{n}{n-2}}} \rho^{\frac{n-|\alpha^1|\cdot q_1 \cdot \frac{n}{n-2}}{q_1}+\cdots+\frac{n-|\alpha^p|\cdot q_p \cdot \frac{n}{n-2}}{q_p}}.\\
\end{split}
\end{equation}
By the {\bf{claim}}, $\displaystyle \frac{1}{q_1}+\cdots +\frac{1}{q_p}=1$.
So \begin{equation}
\begin{split}
&\frac{n-|\alpha^1|\cdot q_1 \cdot \frac{n}{n-2}}{q_1}+\cdots+\frac{n-|\alpha^p|\cdot q_p \cdot \frac{n}{n-2}}{q_p}\\
=&n-  \frac{n}{n-2} \sum_{k=1}^p |\alpha^k|=n- \frac{n}{n-2}\cdot (n-2-a)=a  \cdot \frac{n}{n-2}.\\
\end{split}
\end{equation}
Thus \eqref{eqn:4.7} becomes
\begin{equation}\label{eqn:4.8}
\frac{1}{\rho^{a \cdot \frac{n}{n-2}}}\int_{E_j \cap (B^0 (2\rho)\setminus B^0 (\rho))} |\partial^{\alpha^1} w\cdots \partial^{\alpha^p }w |^{\frac{n}{n-2}}  dx
\leq C.\\
\end{equation}This completes the proof of Lemma \ref{vanishing2}.
\end{proof}

\begin{proof}[Proof  of \underline{\bf{claim}}]
Let $q_k= \frac{(n-1) \cdot \frac{n-2}{n}}{|\alpha^k|}-\epsilon$ for $k=1,...,p-1$. 
Let $q_p$ be defined by
\begin{equation}
\frac{1}{q_p}=1-\frac{1}{\frac{(n-1) \cdot \frac{n-2}{n}}{|\alpha^1|}-\epsilon}-\cdots-\frac{1}{\frac{(n-1) \cdot \frac{n-2}{n}}{|\alpha^{p-1}|}-\epsilon}.
\end{equation}
We will choose the value of $\epsilon$ later.
It is obvious that $q_1,...,q_p$ satisfy condition (1), (2) in the claim, 
and $q_1,...,q_{p-1}$ satisfy condition (3). We now show $q_p$ satisfies condition (3) if $\epsilon$ is chosen small enough.
To prove \begin{equation}
\begin{split}
&|\alpha^p|\cdot q_p\\
=& [(n-2-a)-\sum_{k=1}^{p-1}|\alpha^{k}|]\cdot q_p<(n-1) \cdot \frac{n-2}{n},
\end{split}
\end{equation}
It is the same to prove 
\begin{equation}
\begin{split}&(n-2-a)-\sum_{k=1}^{p-1}|\alpha^{k}|\\
<& (n-1)\cdot \frac{n-2}{n}(1-\frac{1}{\frac{(n-1)\cdot \frac{n-2}{n}}{|\alpha^1|}-\epsilon}-\cdots-\frac{1}{\frac{(n-1)\cdot \frac{n-2}{n}}{|\alpha^{p-1}|}-\epsilon}),\\
\end{split}\end{equation}
i.e. 
\begin{equation}\label{eqn:4.32}
\begin{split}
&n-2-a-\sum_{k=1}^{p-1}|\alpha^{k}|\\
<& (n-1)\cdot \frac{n-2}{n}-(\frac{1}{\frac{1}{|\alpha^1|}-\frac{\epsilon}{(n-1)\cdot \frac{n-2}{n}}}+\cdots+\frac{1}{\frac{1}{|\alpha^{p-1}|}-\frac{\epsilon}{(n-1)\cdot \frac{n-2}{n}}}).\\
\end{split}\end{equation}
For each $k=1,...,p-1$,
\begin{equation}\frac{1}{\frac{1}{|\alpha^k|}-\frac{\epsilon}{n-1}}\leq |\alpha^k|+O(\epsilon).\end{equation}
Thus 
\begin{equation}
\frac{1}{\frac{1}{|\alpha^1|}-\frac{\epsilon}{(n-1)\cdot \frac{n-2}{n}}}+\cdots+\frac{1}{\frac{1}{|\alpha^{p-1}|}-\frac{\epsilon}{(n-1)\cdot \frac{n-2}{n}}}\leq \sum_{k=1}^{p-1}|\alpha^{k}|+O(\epsilon).
\end{equation}
Since $n-2-a\leq n-3<(n-1) \cdot \frac{n-2}{n}$,
we can choose $\epsilon$ small enough, such that,
\begin{equation}n-2-a < (n-1)\cdot \frac{n-2}{n}-O(\epsilon).\end{equation}
This proves \eqref{eqn:4.32}. Thus it completes the proof of the claim.

%Note that $a=\frac{(n_1+1)n}{n-2}>1$.\\
%Then $$a-1-(\frac{1}{\frac{1}{|\alpha^1|}-\frac{\epsilon}{n-1}}-\cdots-\frac{1}{\frac{1}{|\alpha^{p-1}|}-\frac{\epsilon}{n-1}})\geq \frac{a-1}{2}-\sum_{k=1}^{p-1}|\alpha^{k}|> -\sum_{k=1}^{p-1}|\alpha^{k}|.$$
\end{proof}

\begin{proof}[Proof of Theorem \ref{mainhigh}.]
The $Q$-curvature in higher dimensions has complicated expression. By its formal definition, the integral of the $Q$-curvature over a closed manifold is invariant under conformal change of the metric. By  results of S.  Alexakis's  \cite{Alex1, Alex2} on classfication of global conformal invariants on even dimensional manifolds, the $Q$-curvature is a linear combination a local conformal invariant $W(g)$, a divergence term and the Pfaffian of the curvature. More precisely,
\begin{equation*}
Q(g)=W(g)+div_i T^i(g)+A \cdot \mathrm{Pfaff(\Omega)},
\end{equation*}
where $W(g)$ is a local conformal invariant of weight $-n$, $T^i(g)$ is an intrisic vector field of weight $- n + 1$,
$A=2^{\frac{n}{2}-2}(\frac{n-2}{2})!$.

By a classical result which is essentially due to Weyl \cite{Weyl},
an intrinsic vector field $T^i(g)$ is a linear combination
$$T^i(g)= \sum_{q\in Q} a_q C^{q,i}(g).$$
Each $C^{q,i}(g)$ is a partial contraction with one free index that takes the form
$$C^{q,i}(g)=pcontr(\nabla^{(m_1)}_{r_1\dots r_{m_1}}R_{i_1j_1k_1l_1} \otimes \cdots \otimes \nabla^{(m_a)}_{t_1\dots t_{m_a}}R_{i_aj_ak_al_a})$$
with $\sum_{t=1}^a (m_t+2)=n-1$.

Apparently, on locally conformally flat manifolds, the local conformally invariant $W(g)$ vanishes. The Pfaffian of curvature $\mathrm{Pfaff(\Omega)}$, by Lemma \ref {Pfaffian high} is equal to
$(n -1)!!\cdot\det (d\vec n)$. By Lemma \ref {vanishing2}, the divergence term of weight $- n + 1$ also vanishes:
\begin{equation*}\int_ M div_i T^i(g)dv_g = 0. \end{equation*}
Therefore,
\begin{equation}
\begin{split}
\int_ M Q_g dv_g =& A \int_ M 	\mathrm{Pfaff(\Omega)} dv_g\\
=&A (n-1)!! \int_ M \det (d\vec n)dv_g\\
\end{split}
\end{equation}
By Lemma \ref{det}, this is equal to
$$A (n-1)!! |\mathbb{S}^n| m.
$$
Since $|\mathbb{S}^n|= \frac{2^{\frac{n}{2}+1} \pi^{\frac{n}{2}}}{(n-1)!!}$
\begin{equation}
\begin{split}
&A (n-1)!! |\mathbb{S}^n| m= 2c_n m,\\
\end{split}
\end{equation}
where $c_n= 2 ^ {n -2} (\frac {n -2} 2)! \pi ^ {\frac n 2}$. It is equal to the integral of the $Q$-curvature on the standard $n $-hemisphere $\mathbb{S}^n_+$.
This completes the proof of the theorem.
\end{proof}

%\query{B. White has an errtum paper of his paper in JDG 1987, correcting a small mistake. Maybe we should also cite for complteness. His result, of course, is still correct}

\hide{\section{Continuous extension}
\begin{theorem}\label{cont extension}
Under the same assumptions as in Theorem \ref{integer}, and in addition let us assume $(M,g)$ is nonpositively curved. Then $(M,g)$
is properly immthe jobersed and the Gauss map extends continuously to the end of $M$. Namely, the Gauss map extends continuously to $\bar{M}$.
\end{theorem}

\section{Example}
For $w=\log\log |x|$, what is the integral of $Q$-curvature, and what is the PDE it satisfies? Is it a normal metric?

}
\hide{
S. Chern
&
R
.
Osserman
,
Complete
minimal
surfaces
in
euclidean
n-space,
J
.
Analys
e
Math
.
19(1967)15-34
.}

\begin{bibdiv}
\begin{biblist}

\bib{Alex1}{article}{
AUTHOR = {Alexakis, Spyros},
TITLE = {On the decomposition of global conformal invariants. {I}},
JOURNAL = {Ann. of Math. (2)},
FJOURNAL = {Annals of Mathematics. Second Series},
VOLUME = {170},
YEAR = {2009},
NUMBER = {3},
PAGES = {1241--1306},
ISSN = {0003-486X},
CODEN = {ANMAAH},
MRCLASS = {53C20 (53A30)},
MRNUMBER = {2600873 (2011g:53063)},
}

\bib{Alex2}{book}{
author={Alexakis, Spyros},
title={The decomposition of global conformal invariants},
series={Annals of Mathematics Studies},
volume={182},
publisher={Princeton University Press, Princeton, NJ},
date={2012},
pages={x+449},
}

\bib{Branson}{article}{
AUTHOR = {Branson, Thomas P.},
TITLE = {Sharp inequalities, the functional determinant, and the
complementary series},
JOURNAL = {Trans. Amer. Math. Soc.},
FJOURNAL = {Transactions of the American Mathematical Society},
VOLUME = {347},
YEAR = {1995},
NUMBER = {10},
PAGES = {3671--3742},
ISSN = {0002-9947},
CODEN = {TAMTAM},
MRCLASS = {58G26 (22E46 53A30)},
MRNUMBER = {1316845 (96e:58162)},
MRREVIEWER = {Friedbert Pr{\"u}fer},
%  DOI = {10.2307/2155203},
URL = {http://dx.doi.org/10.2307/2155203},
}

\bib{CQY1}{article}{
AUTHOR = {Chang, Sun-Yung Alice} AUTHOR =  {Qing, Jie} AUTHOR = { Yang, Paul}
TITLE = {On the {C}hern-{G}auss-{B}onnet integral for conformal metrics
on {$\bold R^4$}},
JOURNAL = {Duke Math. J.},
FJOURNAL = {Duke Mathematical Journal},
VOLUME = {103},
YEAR = {2000},
NUMBER = {3},
PAGES = {523--544},
ISSN = {0012-7094},
CODEN = {DUMJAO},
MRCLASS = {53C65},
MRNUMBER = {1763657 (2001d:53083)},
MRREVIEWER = {John Urbas},
}

\bib{CQY2}{article}{
AUTHOR = {Chang, Sun-Yung Alice} AUTHOR =  {Qing, Jie} AUTHOR = {Yang, Paul}
TITLE = {Compactification of a class of conformally flat 4-manifold},
JOURNAL = {Invent. Math.},
FJOURNAL = {Inventiones Mathematicae},
VOLUME = {142},
YEAR = {2000},
NUMBER = {1},
PAGES = {65--93},
ISSN = {0020-9910},
CODEN = {INVMBH},
MRCLASS = {53C21 (58J60)},
MRNUMBER = {1784799 (2001m:53061)},
MRREVIEWER = {Robert McOwen},
}

\bib{Chern}{book}{
author={Chern, S. S.},
author={Chen, W. H.},
author={Lam, K. S.},
title={Lectures on differential geometry},
series={Series on University Mathematics},
volume={1},
publisher={World Scientific Publishing Co., Inc., River Edge, NJ},
date={1999},
pages={x+356},
}

\bib{Chern-Osserman}{article}{
author={Chern, Shiing-Shen},
author={Osserman, Robert},
title={Complete minimal surfaces in euclidean $n$-space},
journal={J. Analyse Math.},
volume={19},
date={1967},
pages={15--34},
}

\bib{Cohn-Vossen}{article}{
AUTHOR = {Cohn-Vossen, Stefan},
TITLE = {K\"urzeste {W}ege und {T}otalkr\"ummung auf {F}l\"achen},
JOURNAL = {Compositio Math.},
FJOURNAL = {Compositio Mathematica},
VOLUME = {2},
YEAR = {1935},
PAGES = {69--133},
ISSN = {0010-437X},
CODEN = {CMPMAF},
MRCLASS = {Contributed Item},
MRNUMBER = {1556908},
}

\bib{Saloff}{article}
{AUTHOR = {Coulhon, Thierry} AUTHOR={Saloff-Coste, Laurent},
TITLE = {Isop\'erim\'etrie pour les groupes et les vari\'et\'es},
JOURNAL = {Rev. Mat. Iberoamericana},
FJOURNAL = {Revista Matem\'atica Iberoamericana},
VOLUME = {9},
YEAR = {1993},
NUMBER = {2},
PAGES = {293--314},
ISSN = {0213-2230},
MRCLASS = {58G99},
MRNUMBER = {1232845 (94g:58263)},
MRREVIEWER = {Robert Brooks},
%DOI = {10.4171/RMI/138},
URL = {http://dx.doi.org/10.4171/RMI/138},
}

%\bib{Dr}{article}{
%  AUTHOR = {Druet, Olivier},
%    TITLE = {Sharp local isoperimetric inequalities involving the scalar
%             curvature},
%  JOURNAL = {Proc. Amer. Math. Soc.},
% FJOURNAL = {Proceedings of the American Mathematical Society},
%   VOLUME = {130},
%     YEAR = {2002},
%   NUMBER = {8},
%    PAGES = {2351--2361 (electronic)},
%     ISSN = {0002-9939},
%    CODEN = {PAMYAR},
%  MRCLASS = {53C21 (35J20)},
% MRNUMBER = {1897460 (2003b:53036)},
% MRREVIEWER = {Gilles Carron},
%     }

\bib{DS1}{article}{
AUTHOR = {David, Guy} AUTHOR ={ Semmes, Stephen},
TITLE = {Strong {$A_\infty$} weights, {S}obolev inequalities and
quasiconformal mappings},
BOOKTITLE = {Analysis and partial differential equations},
SERIES = {Lecture Notes in Pure and Appl. Math.},
VOLUME = {122},
PAGES = {101--111},
PUBLISHER = {Dekker},
ADDRESS = {New York},
YEAR = {1990},
MRCLASS = {30C65 (42B20)},
MRNUMBER = {1044784 (91c:30037)},
MRREVIEWER = {Michel Zinsmeister},
}

\bib{FeffermanGraham}{article}{
AUTHOR = {Fefferman, Charles} AUTHOR={Graham, C. Robin},
TITLE = {The ambient metric},
SERIES = {Annals of Mathematics Studies},
VOLUME = {178},
PUBLISHER = {Princeton University Press},
ADDRESS = {Princeton, NJ},
YEAR = {2012},
PAGES = {x+113},
ISBN = {978-0-691-15313-1},
MRCLASS = {53A30 (53A55 53C20)},
MRNUMBER = {2858236},
MRREVIEWER = {Michael G. Eastwood},
}

\bib{Fiala}{article}{
AUTHOR = {Fiala, F.},
TITLE = {Le probl\`eme des isop\'erim\`etres sur les surfaces ouvertes
\`a courbure positive},
JOURNAL = {Comment. Math. Helv.},
FJOURNAL = {Commentarii Mathematici Helvetici},
VOLUME = {13},
YEAR = {1941},
PAGES = {293--346},
ISSN = {0010-2571},
MRCLASS = {52.0X},
%  MRNUMBER = {0006422 (3,301b)},
MRREVIEWER = {J. J. Stoker},
}

\bib{Finn}{article}{
AUTHOR = {Finn, Robert},
TITLE = {On a class of conformal metrics, with application to
differential geometry in the large},
JOURNAL = {Comment. Math. Helv.},
FJOURNAL = {Commentarii Mathematici Helvetici},
VOLUME = {40},
YEAR = {1965},
PAGES = {1--30},
ISSN = {0010-2571},
MRCLASS = {53.25},
%  MRNUMBER = {0203618 (34 \#3467)},
MRREVIEWER = {T. Klotz},
}

%  \bib{GrahamZworski}{article}{
%   AUTHOR = {Graham, C. Robin} AUTHOR={Zworski, Maciej},
%    TITLE = {Scattering matrix in conformal geometry},
%  JOURNAL = {Invent. Math.},
% FJOURNAL = {Inventiones Mathematicae},
%   VOLUME = {152},
%     YEAR = {2003},
%   NUMBER = {1},
%    PAGES = {89--118},
%     ISSN = {0020-9910},
%    CODEN = {INVMBH},
%  MRCLASS = {58J50},
% MRNUMBER = {1965361 (2004c:58064)},
% MRREVIEWER = {Andrew W. Hassell},
%      %DOI = {10.1007/s00222-002-0268-1},
%      URL = {http://dx.doi.org/10.1007/s00222-002-0268-1},
% }

\bib{hartman}{article}{
   author={P. Hartman},
   title={Geodesic parallel coordinates in the large},
   journal={Amer. J. Math.},
   volume={86},
   date={1964},
   pages={705--727},
   issn={0002-9327},
}

\bib{Huber}{article}{
AUTHOR = {Huber, Alfred},
TITLE = {On subharmonic functions and differential geometry in the
large},
JOURNAL = {Comment. Math. Helv.},
FJOURNAL = {Commentarii Mathematici Helvetici},
VOLUME = {32},
YEAR = {1957},
PAGES = {13--72},
ISSN = {0010-2571},
MRCLASS = {30.00 (31.00)},
}

\bib{lu}{article}{
   author={Lu, Zhiqin},
   title={On the lower order terms of the asymptotic expansion of
   Tian-Yau-Zelditch},
   journal={Amer. J. Math.},
   volume={122},
   date={2000},
   number={2},
   pages={235--273},
   issn={0002-9327},
   %review={\MR{1749048 (2002d:32034)}},
}

\bib{Stein}{book}{
AUTHOR = {Stein, Elias M.},
TITLE = {Harmonic analysis: real-variable methods, orthogonality, and
oscillatory integrals},
SERIES = {Princeton Mathematical Series},
VOLUME = {43},
NOTE = {With the assistance of Timothy S. Murphy,
Monographs in Harmonic Analysis, III},
PUBLISHER = {Princeton University Press},
ADDRESS = {Princeton, NJ},
YEAR = {1993},
PAGES = {xiv+695},
ISBN = {0-691-03216-5},
MRCLASS = {42-02 (35Sxx 43-02 47G30)},
% MRNUMBER = {1232192 (95c:42002)},
MRREVIEWER = {Michael Cowling}
}

\bib{Varopoulos}{article}{
AUTHOR = {Varopoulos, N. Th.},
TITLE = {Small time {G}aussian estimates of heat diffusion kernels.
{I}. {T}he semigroup technique},
JOURNAL = {Bull. Sci. Math.},
FJOURNAL = {Bulletin des Sciences Math\'ematiques},
VOLUME = {113},
YEAR = {1989},
NUMBER = {3},
PAGES = {253--277},
ISSN = {0007-4497},
CODEN = {BSMQA9},
MRCLASS = {58G11 (35K05 47D05 60J35)},
% MRNUMBER = {1016211 (90k:58216)},
MRREVIEWER = {Ana Bela Cruzeiro},
}

\bib{YW1}{article}{
AUTHOR = {Wang, Yi},
TITLE = {The isoperimetric inequality and quasiconformal maps on
manifolds with finite total {$Q$}-curvature},
JOURNAL = {Int. Math. Res. Not. IMRN},
FJOURNAL = {International Mathematics Research Notices. IMRN},
YEAR = {2012},
NUMBER = {2},
PAGES = {394--422},
ISSN = {1073-7928},
MRCLASS = {53C21 (53C20)},
MRNUMBER = {2876387},
MRREVIEWER = {Joseph E. Borzellino},
}

\bib{YW2}{article}{
author={Wang, Yi},
title={The isoperimetric inequality and $Q$-curvature},
journal={Adv. Math.},
volume={281},
date={2015},
pages={823--844},
}

\bib{Weyl}{book}{
author={Weyl, Hermann},
title={The Classical Groups. Their Invariants and Representations},
publisher={Princeton University Press, Princeton, N.J.},
date={1939},
pages={xii+302},
}

\bib{White}{article}{
author={White, Brian},
title={Complete surfaces of finite total curvature},
journal={J. Differential Geom.},
volume={26},
date={1987},
number={2},
pages={315--326},
issn={0022-040X},
%   review={\MR{906393 (88m:53020)}},
}

\bib{WhiteErratum}{article}{
   author={White, Brian},
   title={Correction to: ``Complete surfaces of finite total curvature'' [J.
   Differential Geom.\ {\bf 26} (1987), no.\ 2, 315--326; MR0906393
   (88m:53020)]},
   journal={J. Differential Geom.},
   volume={28},
   date={1988},
   number={2},
   pages={359--360},
   issn={0022-040X},
   %review={\MR{961520 (89j:53009)}},
}

\end{biblist}
\end{bibdiv}
\end{document}